\documentclass{article}
\usepackage[utf8]{inputenc}
\usepackage[T1]{fontenc}
\usepackage{graphicx}
\usepackage{epstopdf}
\usepackage{amssymb}
\usepackage{latexsym}
\usepackage{listings}
\usepackage{amsfonts}
\usepackage{amsthm}
\usepackage{bm}
\usepackage{amsmath}
\usepackage{thmtools}
\usepackage[linesnumbered]{algorithm2e}
\usepackage{biblatex}
\graphicspath{ {Figures/} } 
\usepackage{color}
\usepackage{standalone} 
\usepackage[table,x11names]{xcolor}
\usepackage{soul}
\usepackage{multirow}
\usepackage{multicol}			
\usepackage{caption}			
\usepackage{subcaption}		
\usepackage[colorlinks]{hyperref}
\hypersetup{citecolor = {blue},pdfauthor=author}
\usepackage{cleveref}
\usepackage{tikz}
\usepackage{tikz-cd}
\usepackage{pgfplots}
\usepackage{pgfplotstable}
\usepackage{sectsty}			
\usepackage{physics}
\usepackage{array}
\usepackage{float}
\usepackage{forest}
\usepackage{xparse}
\usepackage{lipsum}
\usepackage{multicol}
\usepackage{todonotes}
\usepackage{csvsimple}

\newcommand{\citeabstract}[1]{[\citeauthor{#1}, \citefield{#1}{journaltitle}, \citeyear{#1}]}

\usepackage{acro}

\acsetup{first-style=long-short,
    barriers/use=true,
    barriers/reset=true,
    }

\DeclareAcronym{sbp}        {short=SBP, long=summation by parts}
\DeclareAcronym{sat}        {short=SAT, long=simultaneous approximation term}
\DeclareAcronym{ibp}        {short=IBP, long=integration by parts}
\DeclareAcronym{ibvp}       {short=IBVP,long=initial boundary value problem}
\DeclareAcronym{cg}         {short=CG,  long=conjugate gradient}
\DeclareAcronym{gmres}      {short=GMRES,long=generalised minimium residual}
\DeclareAcronym{bicgstab}   {short=BiCG-Stab,long=biconjugate gradient stabilised}
\DeclareAcronym{imex}       {short=IMEX,long=implicit explicit}
\DeclareAcronym{cfl}        {short=CFL, long=Courant–Friedrichs–Lewy}
\DeclareAcronym{ode}        {short=ODE, long=ordinary differential equation}
\DeclareAcronym{pde}        {short=PDE, long=partial differential equation}
\DeclareAcronym{cn}         {short=CN,  long=Crank-Nicholson}
\DeclareAcronym{fem}        {short=FEM, long=finite element method}
\DeclareAcronym{fd}         {short=FD,  long=finite difference}
\DeclareAcronym{dg}         {short=DG,  long=discontinuous Galerkin}
\DeclareAcronym{spec}       {short=SPEC,long=Stepped Pressure Equilibrium Code}
\DeclareAcronym{ap}         {short=AP,  long=asymptotic preserving}
\DeclareAcronym{kam}        {short=KAM, long=Kolmogorov-Arnol'd-Moser}
\DeclareAcronym{idw}        {short=IDW, long=Inverse Distance Weighting}

\usetikzlibrary{shapes,arrows,arrows.meta,decorations.pathmorphing,decorations.markings,decorations.pathreplacing,patterns,shapes.geometric}
\usepgfplotslibrary{groupplots}
\pgfplotsset{compat=1.13}
\DeclareMathOperator{\arcsinh}{asinh}
\pgfkeys{/pgf/declare function={arcsinh(\x) = ln(\x + sqrt(\x^2+1));}}

\tikzcdset{scale cd/.style={every label/.append style={scale=#1},
    cells={nodes={scale=#1}}}}

\newcommand{\R}{\mathbb{R}}

\newcommand{\half}{\frac{1}{2}}

\newcolumntype{C}{>{$}c<{$}}

\newtheorem{theorem}{Theorem}[section]
\theoremstyle{definition}

\declaretheoremstyle[
  numbered=yes,
  numberlike=theorem,
  spaceabove=1em plus 0.75em minus 0.25em,
  spacebelow=1em plus 0.75em minus 0.25em,
  qed={}
]{exmpstyle}

\newtheorem{lemma}[theorem]{Lemma}
\newtheorem*{remark}{Remark}

\newcommand{\logLogSlopeTriangleText}[6]
{

    \pgfplotsextra
    {
        \pgfkeysgetvalue{/pgfplots/xmin}{\xmin}
        \pgfkeysgetvalue{/pgfplots/xmax}{\xmax}
        \pgfkeysgetvalue{/pgfplots/ymin}{\ymin}
        \pgfkeysgetvalue{/pgfplots/ymax}{\ymax}

        \pgfmathsetmacro{\xArel}{#1}
        \pgfmathsetmacro{\yArel}{#3}
        \pgfmathsetmacro{\xBrel}{#1-#2}
        \pgfmathsetmacro{\yBrel}{\yArel}
        \pgfmathsetmacro{\xCrel}{\xArel}

        \pgfmathsetmacro{\lnxB}{\xmin*(1-(#1-#2))+\xmax*(#1-#2)} 
        \pgfmathsetmacro{\lnxA}{\xmin*(1-#1)+\xmax*#1} 
        \pgfmathsetmacro{\lnyA}{\ymin*(1-#3)+\ymax*#3} 
        \pgfmathsetmacro{\lnyC}{\lnyA+(-#4)*(\lnxA-\lnxB)}
        \pgfmathsetmacro{\yCrel}{\lnyC-\ymin)/(\ymax-\ymin)} 

        \coordinate (A) at (rel axis cs:\xArel,\yArel);
        \coordinate (B) at (rel axis cs:\xBrel,\yBrel);
        \coordinate (C) at (rel axis cs:\xCrel,\yCrel);

        \draw[#5]   (A) node[pos=0.5,anchor=north] {1}
                    (B)-- 
                    (C) node[xshift=0.2,yshift=0.2,pos=0.6,above,sloped,#6] {slope #4};
    }
}

\newcommand{\logLogSlopeTriangle}[5]{\logLogSlopeTriangleText{#1}{#2}{#3}{#4}{#5}{}}

\newcommand\keywords[1]{%
    \begingroup
    \let\and\\
    \par
    \noindent\textit{Keywords:} #1\par
    \endgroup
}

\newcommand{\setfoot}[2]{%
    \footnote{#2}%
    \newcounter{#1}%
    \setcounter{#1}{\value{footnote}}%
}

\usepackage[margin=20mm]{geometry} 

\usepackage{biblatex}
\usepackage{color}
\usepackage{soul}
\usepackage{todonotes}
\pgfplotsset{
        compat=1.13,
        my style/.style={
            width=0.5\textwidth,height=0.5\textwidth,
            xlabel=$n$,
            ylabel=$\log(rel\; err)$,
        },
        my other style/.style={
            width=0.5\textwidth,height=0.3\textwidth,
            xlabel=$n$,
        },
        my legend style/.style={
            legend entries={
                2nd order,
                4th order,
            },
            legend style={
                at={([yshift=4pt]1.0,1.1)},
                anchor=north,
            },
            legend columns=2,
        },
        cycle multiindex* list={
            blue!75!black,
            red!75!black,
            green!75!black,
            cyan!75!black,
            magenta!75!black,
            yellow!75!black,
            black!75!black,
                \nextlist
            mark=*,
            mark=square*,
            mark=x,
                \nextlist
        }}

\newcommand{\equal}{=}

\title{A provably stable numerical method for the anisotropic diffusion equation in confined magnetic fields: Curvilinear coordinates and multi-block domains.}
\author{D. Muir 
    \setfoot{cor}{Corresponding author} 
    \setfoot{nmpp}{Numerical Methods in Plasma Physics, Max Plank Institute for Plasma Physics, Garching.}, 
    K. Duru
    \setfoot{utep}{The University of Texas at El Paso, Texas, United States.},
    S. Hudson
    \setfoot{proxima}{Proxima Fusion GmbH, Floßergasse 2, Munich, 81369, Germany.},
    M. Hole
    \setfoot{anu}{Mathematical Sciences Institute, Australian National University, Australian Capital Territory, \textsc{Australia}.} 
    }
\date{February 2024}

\begin{document}

\maketitle

\keywords{
    Summation by parts finite difference;  
    Energy stable; 
    Penalty method; 
    Anisotropic diffusion; 
    Operator splitting; 
    Toroidal magnetic fields; 
    Fusion plasma physics
}

\begin{abstract}
    We present a robust and accurate numerical method for the anisotropic diffusion equation in curvilinear coordinates. This study extends the recent work \citeabstract{muir_provably_2025} for solving the anisotropic diffusion equation in magnetic fields from Cartesian meshes to to curvilinear coordinates and complex geometries.
    The method uses summation by parts with simultaneous approximation terms for computing the diffusion perpendicular to field lines. The diffusion along field lines is computed using a penalty approach, similar to a simultaneous approximation term, but applied across the volume.
    To extend the method to complex geometry we use a multi-block approach with piecewise smooth structured meshes. That is, the domain is split into sub-grids, with locally adjacent boundaries coupled weakly using penalties.
    We prove the semi-discrete stability for the curvilinear implementation by deriving discrete energy estimates. 
    The approach is verified though a number of numerical tests, which demonstrate the convergence properties of the method in multi-domain approach.
    Finally, we present a qualitative result generated in complex geometry and magnetic field, which is generated by the Stepped Pressure Equilibrium Code.
\end{abstract}

\section{Introduction}

The anisotropic diffusion equation can be used for modelling heat transport in magnetic confinement fusion plasmas. In fusion scenarios, the ratio of diffusion coefficient parallel and perpendicular to the magnetic field can be on the order of $10^{10}$ \cite{fitzpatrick_helical_1995}, and introduces disparate temporal and spatial scales in the solution. The presence of the disparate scales in the solution, spanning many orders of magnitude, creates a significant challenge for numerical methods.
In addition, fusion simulations often require modelling complex geometries, for instance when simulating transport in the edge region of a plasma between regions of the main plasma body and the wall of a fusion device. These regions typically feature magnetic islands, separatrices or field line chaos, which may be used to control the impact of fast particles on the wall of the fusion device.
The complexity of the magnetic geometry adds to the complication of the strongly multi-scale problem, since the structure of the magnetic field may prevent the construction of a grid which can be aligned with it. 
This can result in the slow scale perpendicular diffusion being polluted by numerical errors from the fast scale \cite{hudson_temperature_2008,chacon_asymptotic-preserving_2014,gunter_modelling_2005, umansky_numerical_2005}.

Recent work \cite{muir_provably_2025} introduced a novel penalty method to solving the anisotropic diffusion equation in Cartesian coordinates. Since the interest is in toroidal confinement fusion, the penalty approach is suitable in cases where the magnetic field is periodic in at least one direction. The method uses field line tracing to model the diffusion along field lines, and a non-linear penalty term that weakly implements the parallel diffusion.

In this study we extend the results of that work \cite{muir_provably_2025} from Cartesian meshes to curvilinear geometries.
To facilitate the implementation in geometrically complex domains, the computational grid is split into multiple sub-domains, and the solution is approximated locally. The numerical solution across domain boundaries is coupled weakly using simultaneous approximation terms, in a way equivalent to certain discontinuous Galerkin schemes \cite{gassner_skew-symmetric_2013}. The variable coefficient SBP operators introduced in previous work efficiently compute the curvilinear derivatives with the metric information embedded in the diffusion coefficient matrix.
Further building on that work, the method in curvilinear coordinates is also shown to be unconditionally stable.
We remark that recent work \cite{chacon_asymptotic-preserving_2024} has shown that non-splitting methods can achieve larger time steps in the presence of a separatrix due to the separation in time scales. However, current approach still relies on operator splitting due to the construction of the parallel operator.

The paper is structured as follows; section \ref{sec:Continuous analysis} outlines the field aligned form of the anisotropic diffusion equation.
This is followed in section \ref{sec:Numerical preliminaries} by introducing the \ac{sbp} operators and the parallel penalty, though we refer readers to \cite{muir_provably_2025} for a more elaborate introduction.
In section \ref{sec:semidiscrete analysis} we perform analysis of the semi-discrete problem, including deriving an energy bound for the problem with curvilinear coordinates. 
This is followed by analysis of the fully discrete problem in \ref{sec:Discrete analysis}, though the majority of the details of the time stepping scheme are given in \cite{muir_provably_2025}.
Verification of the numerics is presented in section \ref{sec:Verification}, this includes verification of the perpendicular only operator by manufactured solution, and a self-convergence test with a single island.
We then present a numerical experiment by utilising magnetic fields from the \ac{spec} for MHD equilibria.
Finally, we conclude in section \ref{sec:Conclusion}.

\section{Continuous analysis}\label{sec:Continuous analysis}

Theoretical discussion on the diffusion equation can be found in many texts, see for example \cite{evans_partial_2010}. Further, the field aligned anisotropic diffusion equation has also been discussed by many authors \cite{gunter_modelling_2005,chacon_asymptotic-preserving_2014,degond_asymptotic-preserving_2012,van_es_finite-volume_2016,helander_heat_2022}. Therefore, to ensure a self-contained narrative, we only briefly reintroduce the problem here.

Let $u:\Omega\times[0,T]\to\R$ be the solution to the anisotropic diffusion equation, which for instance may correspond to temperature,
\begin{align}\label{eq:ADE}
    \pdv{u}{t} = \nabla\cdot(\bm{K}\nabla u),
\end{align}
where $\Omega\subset\R^3$ and $T>0$ and where $K\in\R^{3\times3}_{\geq0}$ is the symmetric matrix of diffusion coefficients.
Letting the magnetic field $\bm{B}:\Omega\to\R^3$ be a vector valued function then the diffusion coefficient  matrix can be written,
\begin{align}\label{eq:ADE diffusion coefficient matrix}
    \bm{K} = \kappa_\perp I + (\kappa_\parallel - \kappa_\perp)\frac{\bm{B} \bm{B}^T}{|\bm{B}|^2},
\end{align}
where $|\bm{B}|^2 = \bm{B}_x^2 + \bm{B}_y^2 + \bm{B}_z^2$. Defining $\nabla_\parallel := \bm{B}(\bm{B}^T \nabla)/|\bm{B}|^2$, then one can write the equation in field aligned form as,
\begin{align}\label{eq:ADE cont laplace form}
    \pdv{u}{t} = \nabla\cdot(\kappa_\perp\nabla u + \tilde{\kappa}_\parallel\nabla_\parallel u),
\end{align}
where $\tilde{\kappa}_\parallel=\kappa_\parallel-\kappa_\perp$. Unless otherwise stated, for the rest of the paper we will drop the tilde so that $\tilde{\kappa}_\parallel = \kappa_\parallel$.
The diffusion equation \eqref{eq:ADE} is augmented with Dirichlet boundary conditions,
\begin{align}\label{eq:ADE cont boundary conditions}
    u(x_\Gamma,y_\Gamma,\theta,t) = g(x_\Gamma,y_\Gamma), \qquad x_\Gamma, y_\Gamma\in \partial\Omega
\end{align}
and smooth initial condition,
\begin{align}\label{eq:ADE cont initial condition}
    u(x,y,\theta,0) = u_0(x,y,\theta).
\end{align}
For scalar functions $u$ and $v$ we define the inner product and $L^2$ norm by,
\begin{align}
    (u,v) = \int_\Omega u v \, \dd x, \qquad 
    \|u\|^2 = \int_\Omega u^2 \,\dd x.
\end{align}
The anisotropic diffusion equation can then be shown to have the well known energy bound.

\begin{theorem}\label{thrm:ADE cont stability}
    Consider the \acl{ibvp} given by equation \eqref{eq:ADE cont laplace form} with initial condition \eqref{eq:ADE cont initial condition} and boundary conditions \eqref{eq:ADE cont boundary conditions}. If $g=0$ then
    \begin{align}
        \|u\|^2 \leq \|u_0\|^2.
    \end{align}
\end{theorem}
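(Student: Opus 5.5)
The plan is the standard energy method: multiply \eqref{eq:ADE cont laplace form} by $u$, integrate over $\Omega$, and integrate by parts once. Taking the inner product of the equation with $u$ gives
\begin{align}
    \half\dv{t}\|u\|^2 = \left(u,\nabla\cdot(\kappa_\perp\nabla u + \kappa_\parallel\nabla_\parallel u)\right).
\end{align}
Applying the divergence theorem to the right-hand side yields a boundary integral
\[
\oint_{\partial\Omega} u\,\bm{n}\cdot(\kappa_\perp\nabla u + \kappa_\parallel\nabla_\parallel u)\,\dd s
\]
and a volume term
\[
-\int_\Omega \nabla u\cdot(\kappa_\perp\nabla u + \kappa_\parallel\nabla_\parallel u)\,\dd x .
\]
With $g=0$ the Dirichlet condition \eqref{eq:ADE cont boundary conditions} makes $u=0$ on $\partial\Omega$, so the boundary integral vanishes. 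In the directions where the domain is periodic (the $\theta$ direction), the contributions from opposite faces cancel, so no boundary term survives there either.

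The key step is showing that the volume term is non-positive. For the perpendicular part this is immediate: $\kappa_\perp|\nabla u|^2\geq0$. For the parallel part I will use the definition $\nabla_\parallel u = \bm{B}(\bm{B}^T\nabla u)/|\bm{B}|^2$, which gives
\begin{align}
    \nabla u\cdot\nabla_\parallel u = \frac{(\bm{B}\cdot\nabla u)^2}{|\bm{B}|^2}\geq 0.
\end{align}
Since the paper drops the tilde, I need $\kappa_\parallel$ here to mean $\kappa_\parallel-\kappa_\perp$, and this must be non-negative. Equivalently, the volume term is $-\int_\Omega\nabla u^T\bm{K}\nabla u\,\dd x$, and this is $\leq0$ because $\bm{K}$ in \eqref{eq:ADE diffusion coefficient matrix} is symmetric positive semi-definite. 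Its eigenvalues are $\kappa_\perp$, with multiplicity two on the plane orthogonal to $\bm{B}$, and $\kappa_\parallel$ along $\bm{B}$. Both are non-negative under the standing assumption $\bm{K}\in\R^{3\times3}_{\geq0}$, which amounts to $\kappa_\parallel\geq\kappa_\perp\geq0$ in the tilde-free notation. I expect this point to be the main (though mild) obstacle: the statement's notation hides the sign assumption. I would state it explicitly and phrase the argument through $\bm{K}$ to avoid any ambiguity. I also need $\bm{B}\neq0$ wherever $\nabla_\parallel$ is defined.

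Combining these gives $\dv{t}\|u\|^2 = -2\int_\Omega\nabla u^T\bm{K}\nabla u\,\dd x\leq 0$. Integrating in time from $0$ to $t\in[0,T]$ then yields $\|u(\cdot,t)\|^2\leq\|u_0\|^2$, which is the claim. I would assume sufficient smoothness of $u$ for the integration by parts. Alternatively, the same argument can be carried out in the weak formulation with $u\in H^1_0$.
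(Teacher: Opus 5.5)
Your energy argument is correct and is the standard proof the paper defers to; the paper gives no proof of its own and only points to standard PDE texts. You multiply by $u$, integrate by parts, remove the boundary term using $u=0$ on $\partial\Omega$ together with periodicity in $\theta$, and use that $\bm{K}$ is positive semi-definite. One small precision: positive semi-definiteness of $\bm{K}$ needs only $\kappa_\perp\geq0$ and $\kappa_\perp+\tilde{\kappa}_\parallel\geq0$, so your condition $\kappa_\parallel\geq\kappa_\perp\geq0$ is sufficient but not equivalent.
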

The proof of which can be found in standard texts on partial differential equations. A stable numerical method should mimic the energy bound in Theorem \ref{thrm:ADE cont stability}.

While it is often possible to run simulations in Cartesian coordinate systems, it may be that the magnetic fields provided by a plasma equilibrium are in more complex coordinate systems, or provided on a grid in real space. Therefore, to facilitate simulations in more general and real geometries, in this study we extend the perpendicular component of the anisotropic diffusion equation \eqref{eq:ADE cont laplace form} to curvilinear coordinates.

\subsection{Curvilinear coordinates}

Now consider the smooth coordinate transformation $(x(q,r),y(q,r)) \iff (q(x,y),r(x,y))$ where $(q,r)\in\Omega_c\subset[0,1]\times[0,1]$. Moreover suppose that the perpendicular diffusion coefficient changes in space so that $u\left(x(q,r),y(q,r)\right)$ and $\kappa_\perp\left(x(q,r),y(q,r)\right)$.

The Jacobian determinant is given by $J = x_q x_r - y_q y_r > 0$ and the metric relations are,
\begin{align}
    J q_x=y_r, \qquad J q_y=-x_r, \qquad J r_x=-y_q, \qquad J r_y=x_q.
\end{align}
where the subscripts on the coordinates denote the metric derivatives.
Then in curvilinear coordinates the perpendicular component of the anisotropic diffusion equation is,
\begin{align}\label{eq:ADE continuous curvilinear}
    J\pdv{u}{t} = 
        \pdv{q} \left( \kappa_q \pdv{u}{q} \right) + 
        \pdv{r} \left( \kappa_r \pdv{u}{r} \right) + 
        \pdv{q} \left( \kappa_{qr} \pdv{u}{r} \right) +
        \pdv{r} \left( \kappa_{rq} \pdv{u}{q} \right),
\end{align}
where,
\begin{align}
\begin{aligned}
    &\kappa_q = J\left( q_x^2 + q_y^2 \right) \kappa_\perp ,\quad
    \kappa_r = J\left( r_x^2 + r_y^2 \right) \kappa_\perp ,\\
    &\kappa_{qr} = \kappa_{rq} = J\left(q_xr_x + q_yr_y \right).
\end{aligned}
\end{align}

Note that with the appropriate boundary conditions equation \eqref{eq:ADE continuous curvilinear} satisfies a similar energy estimate as described in Theorem \eqref{thrm:ADE cont stability}.

\section{Numerical preliminaries}\label{sec:Numerical preliminaries}

We introduce the numerical techniques first by discussing the operators in one spatial dimension. 
Consider the 1D interval $x\in[x_L,x_R]$, $x_L<x_R$ discretised into $n$ points, so that
$$
\bm{x}_i = x_L + (i-1)\Delta x, \qquad \Delta x = (x_L - x_R)/(n-1), \qquad i = 1,\dots,n.
$$
Further, let $\bm{u}(t)=[u(x_1,t),u(x_2,t),\dots,u(x_n,t)]^T\in\R^n$ denote the semi-discrete form of the scalar field on the grid.
Now let $D_x,D_{xx}^{(\kappa)}\in\R^n$ be the discrete approximations to the first and second derivative operators so that
\begin{align}
    \left.\pdv{u}{x}\right|_{x_i}\approx (D_x\bm{u})_i, \qquad 
    \left.\pdv{x}\left(k\pdv{u}{x}\right)\right|_{x_i} \approx (D_{xx}^{(k)}\bm{u})_i,
\end{align}
where $k>0$.

Let $H=H^T>0$ define a inner product, norm and composite quadrature rule with strictly positive weights such that,
\begin{align}\label{eq:H inner product norm}
    \int u v \dd x \approx \bm{u}^T H \bm{v},\qquad 
    \|\bm{u}\|_H = \sqrt{\bm{u}^T H \bm{u}}.
\end{align}
Then the first derivative operator is called a \acl{sbp} operator if it can be written,
\begin{align}\label{eq:1D First derivative SBP operator}
    D_x = H^{-1}Q, \qquad Q + Q^T = E_n - E_1 = \operatorname{diag}([-1,0,\dots,0,1]),
\end{align}
such that $E_1 = \operatorname{diag}(\begin{bmatrix} 1 , 0 , \dots , 0 \end{bmatrix})$ and $E_n = \operatorname{diag}(\begin{bmatrix} 0, \dots, 0, 1 \end{bmatrix})$ and where the entries of $Q$ must be determined given accuracy and order constraints.
Let $E_n - E_1 = B = \operatorname{diag}(-1,0,\dots,0,1)$ be the diagonal matrix which projects onto the domain boundaries such that $B\bm{u}=[-u_1,0,\dots,0,u_N]$. Then $D_{xx}^{(\kappa)}$ is called a fully compatible variable coefficient \ac{sbp} operator \cite{mattsson_summation_2012,duru_stable_2014} if it can be written,
\begin{align}
    D_{xx}^{(k)} = H^{-1}(-M^{(k)} + BKD_x), \qquad M^{(k)} = D_x^THKD_x + R_x,
\end{align}
where $R_x=R_x^T\geq 0$ is the remainder term and $K=\operatorname{diag}([\kappa(x_1),\kappa(x_2),\dots,\kappa(x_n)])$ is a matrix of the diffusion coefficient evaluated at the grid points. Note that $M^{(k)}$ is symmetric and positive definite.

For further discussion we refer readers to previous work \cite{muir_provably_2025} or other works such as \cite{mattsson_summation_2012,del_rey_fernandez_review_2014} for a more in-depth description of the \ac{sbp} operators.

\section{The numerical parallel operator}\label{sec:sub:numerical parallel operator}

To define the two dimensional parallel diffusion operator first consider the curvilinear domain $\Omega_c=[0, 1]\times[0, 1]$ as before. Let the coordinates $q$ and $r$ be discretised into $n_q$ and $n_r$ points respectively, so that
\begin{align*}
\bm{q}_i = \frac{(i-1)}{n_q - 1}, \qquad
\bm{r}_j = \frac{(j-1)}{n_r -1 }, \qquad
i = 1,2,\dots,n_q,\quad j = 1,2,\dots,n_r.
\end{align*}
where $\bm{q}_i$ and $\bm{r}_j$ correspond to $(x_i,y_j)$ physical coordinates by way of coordinate transform.
In each of the coordinates the mass matrix $H$ can be written,
\begin{align}\label{eq:Curvilinear H}
    H_q = \Delta q\operatorname{diag}([h_1,h_2,\dots,h_{n_q}]), \qquad 
    H_r = \Delta r\operatorname{diag}([h_1,h_2,\dots,h_{n_r}]),
\end{align}
such that,
\begin{align}
    \bm{H} = H_q\otimes H_r, \qquad \bm{1}^T\bm{H}J\,\bm{u} \approx \int_\Omega u \,\dd x\,\dd y,
\end{align}
where $\bm{1}$ is a vector of ones. 

From \cite{muir_provably_2025} the numerical parallel operator is given by,
\begin{align}\label{eq:Numerical parallel opeartor}
    \bm{P}_\parallel = -\tau_\parallel \kappa_\parallel \bm{H}^{-1} \left(\bm{I} - \frac{1}{2}[\bm{P}_f + \bm{P}_b]\right).
\end{align}
Here, $\mathbf{P}_f$ and $\mathbf{P}_b$ are stable interpolation operators which approximate the solution at points forward and backwards along the magnetic field by field line tracing. The full operator $\bm{P}_\parallel$ then approximates the parallel diffusion.
For stability the parallel penalty must have $\tau_\parallel>0$.
We can be more specific by considering,
\begin{align}
    \bm{w} = \bm{w}_f + \bm{w}_b = \half\left(\mathbf{P}_f\bm{u} + \mathbf{P}_b\bm{u}\right),
\end{align}
then defining $\tau_\parallel$ as,
\begin{align}
    \tau_\parallel = \alpha\left( \frac{\|\bm{u}-\bm{w}\|}{\|\bm{u}\|} \right)^{\beta},
\end{align}
where $\alpha,\beta\in\R$. Previous emperical studies found that $\alpha=0.1$, $\beta=2$ provided good convergence properties while reducing errors due to interpolation \cite{muir_provably_2025}.

\section{Semi-discrete analysis}\label{sec:semidiscrete analysis}

The semi-discrete form of the curvilinear anisotropic diffusion equation \eqref{eq:ADE continuous curvilinear} can then be written,
\begin{align}
    \dv{\bm{u}}{t} = (\bm{D}_\perp + \bm{P}_\parallel)\bm{u},
\end{align}
where
\begin{align}\label{eq:Discrete Pperp Pparallel}
    \bm{D}_\perp \approx \nabla\cdot(\kappa_\perp \nabla), \quad\text{and}\quad 
    \bm{P}_\parallel \approx \nabla \cdot (\kappa_\parallel \nabla_\parallel).
\end{align}
The parallel operator is the same as in previous work \cite{muir_provably_2025}.
We first turn our attention to the perpendicular isotropic diffusion.

\subsection{The perpendicular operator}

To define $\bm{D}_\perp$ consider the two dimensional first derivative \ac{sbp} and boundary operators which are extended from \eqref{eq:1D First derivative SBP operator} by Kronecker product,
\begin{align}\label{eq:2D SBP Operators}
\begin{aligned}
    &\bm{D}_q = D_q\otimes I_r, \quad 
        \bm{H}_q = H_q\otimes I_r, \quad
        \bm{B}_q = B_q\otimes I_r, \\
    &\bm{D}_r = I_q\otimes D_r, \quad 
        \bm{H}_r = I_q\otimes H_r, \quad
        \bm{B}_r = I_q\otimes B_r.
\end{aligned}
\end{align}
where $I_q\in\R^{n_q\times n_q}$ and $I_r\in\R^{n_r\times n_r}$ are identity matrices.
Using these, the second derivative operators in curvilinear coordinates can be defined as,
\begin{align}
    \bm{D}_{qq}^{(\bm{K}_q)} &= \bm{H}_q^{-1}(-\bm{D}_q^T \bm{H}_q \bm{K}_q \bm{D}_q + \bm{B}_q\bm{K}_q\bm{D}_q + \bm{R}_q) \\
    \bm{D}_{rr}^{(\bm{K}_r)} &= \bm{H}_r^{-1}(-\bm{D}_r^T \bm{H}_r \bm{K}_r \bm{D}_r + \bm{B}_r\bm{K}_r\bm{D}_r + \bm{R}_r)
\end{align}
where the $\bm{R}_q$ and $\bm{R}_r$ matrices are the remainder matrices and the coefficient matrices are,
\begin{align}\label{eq:discrete curvilinear diffusion matrix}
\begin{array}{lll}
        &\bm{K}_q    = J(q_x^2 \bm{K}_x + q_y^2 \bm{K}_y),
            &\bm{K}_r    = J(r_x^2 \bm{K}_x + r_y^2 \bm{K}_y), \\
        &\bm{K}_{qr} = J(q_xr_x \bm{K}_x + q_yr_y \bm{K}_y), 
            &\bm{K}_{qr} = \bm{K}_{qr}^T.
\end{array}
\end{align}
Given the above definitions, the discrete form of the perpendicular diffusion operator \eqref{eq:Discrete Pperp Pparallel} in curvilinear coordinates is defined as,
\begin{align}\label{eq:perpendicular FD operator}
    \nabla\cdot(\kappa_\perp \nabla) \approx \mathbf{D}_\perp := (\bm{D}_{qq}^{(\bm{K}_q)} + \bm{D}_{rr}^{(\bm{K}_r)}) + \bm{D}_r\bm{K}_{qr}\bm{D}_q + \bm{D}_q\bm{K}_{qr}^T\bm{D}_r.
\end{align}
Note that the \ac{sbp} property allows us to re-write the two right most terms in \eqref{eq:perpendicular FD operator} when multiplying by $\bm{H}$ such that,
\begin{align}\label{eq:curvilinear cross derivative sbp}
\begin{aligned}
    \bm{H}\bm{D}_q\bm{K}_{qr}\bm{D}_r &= - \bm{D}_q^T\bm{H}\bm{K}_{qr}\bm{D}_r + \bm{H}_r\bm{B}_q\bm{K}_{qr}\bm{D}_r, \\
    \bm{H}\bm{D}_r\bm{K}_{qr}\bm{D}_q &= - \bm{D}_r^T\bm{H}\bm{K}_{qr}\bm{D}_q + \bm{H}_q\bm{B}_r\bm{K}_{qr}\bm{D}_q.
\end{aligned}
\end{align}
This fact will be useful when proving stability.

\begin{remark}
    The coordinate transformations, $q_x$, $q_y$, $r_x$ and $r_y$, can be computed by \ac{sbp} finite differences.
\end{remark}

\subsubsection{Multiblock and boundary conditions}\label{sec:sub:sub:multiblock and boundary conditions}

\begin{figure}[H]
    \centering
    \includegraphics[width=\linewidth]{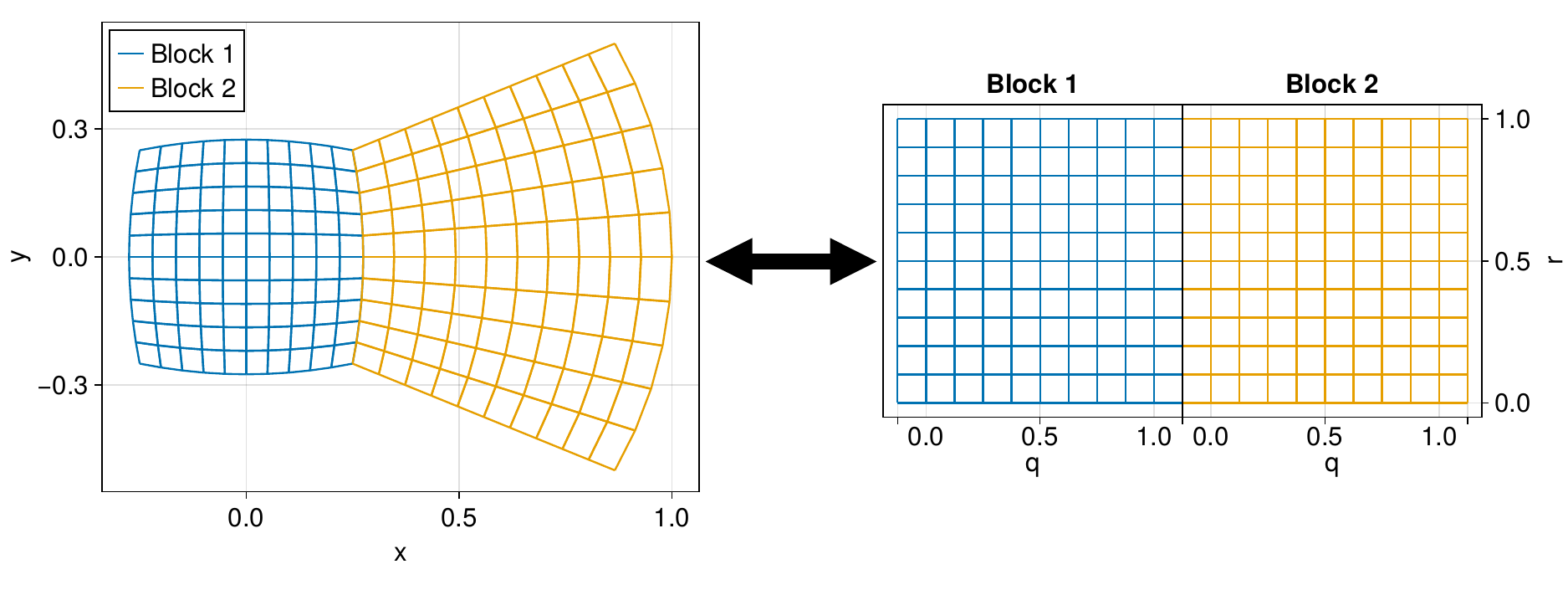}
    \caption{Diagram of a multi-block curvilinear domain with two sub-blocks. The physical sub-domains (left) are  mapped to reference sub-domains (right).}
    \label{fig:multiblock diagram}
\end{figure}
To perform the computations in complex geometry we decompose the domain into multiple sub-blocks. Therefore, consider splitting the real space domain into two regions $\Omega = \Omega^- \cup \Omega^+$ separated by a smooth boundary, for example as depicted in as illustrated in Figure \ref{fig:multiblock diagram}.

Each subdomain can be represented by the reference domain $\Omega_c$ by way of coordinate transform.
Note that, while we only use two blocks here, the ideas presented in this section can be extended to more blocks, with the same results applying.

Boundary and interface (for connecting the solution between subdomains) conditions are enforced weakly by adding \acp{sat} \cite{carpenter_time-stable_1994} to the perpendicular operator giving,
\begin{align}\label{eq:perpendicular diffusion SAT}
    \bm{P}_\perp \bm{u} := (I_2 \otimes \bm{D}_\perp) + \mathbf{SAT}_q + \mathbf{SAT}_r + \mathbf{SAT}_I
\end{align}
where $\bm{D}_\perp$ is as defined in equation \eqref{eq:perpendicular FD operator} such that the first term on the right applies $\bm{D}_\perp$ to either sub-block. The terms $\mathbf{SAT}_q$ and $\mathbf{SAT}_r$ enforce the Dirichlet boundary conditions and $\mathbf{SAT}_I$ enforces the continuity of the solution and its derivative across the block interface. Each \ac{sat} has penalty parameters associated with it, which are chosen to ensure the numerical scheme is energy stable by mimicking the energy estimate of the continuous problem

The interface term in curvilinear coordinates is,
\begin{align}
    &\begin{aligned}
    \mathbf{SAT}_{I,q} &= \tau_{I,0} \bm{H}^{-1} \hat{\bm{L}}_{q,0} \bm{u} + 
            \tau_{I,1} \bm{H}^{-1}_q \left( \hat{\bm{L}}_{q,1} \bm{K}_q \bm{D}_q + \hat{\bm{L}}_{q,1} \bm{K}_{qr} \bm{D}_r \right) \bm{u} + 
        \\&\qquad 
            \tau_{I,2} \bm{H}^{-1}_q \left( (\bm{K}_q \bm{D}_q)^T \hat{\bm{L}}_{q,1} +
                 (\bm{K}_{qr} \bm{D}_r)^T \hat{\bm{L}}_{q,1} \right) \bm{u}
    \end{aligned}\label{eq:SAT interface}
\end{align}
where 
\begin{align}\label{eq:Preliminaries:Interface L0 and L1}
    \hat{\bm{L}}_0 = I_q \otimes \begin{bmatrix}
        \bm{e}_n\bm{e}_n^T & -\bm{e}_n\bm{e}_1^T \\ - \bm{e}_{1}\bm{e}_n & \bm{e}_1\bm{e}_1^T
    \end{bmatrix}, \qquad
    \hat{L}_1 = I_q \otimes \begin{bmatrix}
        \bm{e}_n\bm{e}_n^T & -\bm{e}_n\bm{e}_1^T \\ \bm{e}_1\bm{e}_n^T & -\bm{e}_1\bm{e}_1^T
    \end{bmatrix},
\end{align}
where $\bm{e}_i$ is the $i$th unit vector, such that the above matrices have the elements,
\begin{align}
    [\hat{L}_0 \bm{u}]_i = \begin{cases}
        \bm{u}^-_n - \bm{u}^+_1 &\qquad \text{if } i=n,\\
        \bm{u}^+_1 - \bm{u}^-_n &\qquad \text{if } i=n+1,\\
        0                       &\qquad \text{otherwise},
    \end{cases} \qquad 
    [\hat{L}_1 \bm{u}]_i = \begin{cases}
        \bm{u}^-_n - \bm{u}^+_1 &\qquad \text{if } i=n,\\
        \bm{u}^-_n - \bm{u}^+_1 &\qquad \text{if } i=n+1,\\
        0                       &\qquad \text{otherwise}.
    \end{cases}
\end{align}
The $\hat{\bm{L}}_0$ and $\hat{\bm{L}}$ matrices select the elements of $\bm{u}$ corresponding to the block interfaces.

The \acp{sat} for the Dirichlet boundary conditions in curvilinear coordinates are,
\begin{align}\label{eq:Dirichlet SAT q}
    &\begin{aligned} \mathbf{SAT}_q &=
        \tau_{q,0} \bm{H}_q^{-1}(\bm{B}_{n_q} - \bm{B}_{1_q}) \bm{K}_q \bm{H}^{-1}_q (\bm{B}_{n_q} - \bm{B}_{1_q})(\bm{u}-\bm{g})
        + \\
        &\tau_{q,1} \bm{H}_q^{-1}\left( (\bm{K}_q \bm{H}_q \bm{D}_q)^T + 
            (\bm{K}_{qr}\bm{H}_q\bm{D}_r)^T \right)\bm{H}_q^{-1} (\bm{B}_{n_q} - \bm{B}_{1_q})(\bm{u}-\bm{g}),
    \end{aligned}
        \\
    &\begin{aligned}
    \mathbf{SAT}_r &= \tau_{r,0} \bm{H}_r^{-1}(\bm{B}_{n_r} - \bm{B}_{1_r}) \bm{K}_r \bm{H}_r^{-1} (\bm{B}_{n_r} - \bm{B}_{1_r})(\bm{u}-\bm{g})
        + \\
        &\qquad\tau_{r,1} \bm{H}_r^{-1} \left( (\bm{K}_r \bm{H}_r \bm{D}_r)^T + 
        (\bm{K}_{qr}\bm{H}_r\bm{D}_q)^T \right)\bm{H}_r^{-1} \bm{B}_r (\bm{u}-\bm{g}).
    \end{aligned}\label{eq:Dirichlet SAT r}
\end{align}
where $\bm{g}$ is a vector of the boundary data and where
\begin{align}
    \bm{B}_{i_q} = \bm{e}_i\bm{e}_i^T\otimes I_r,\quad \bm{B}_{j_r} = I_q\otimes \bm{e}_j\bm{e}_j^T, \quad i\in\{1,n_q\}, \, j\in\{1,n_r\},
\end{align}
pick out the elements along the exterior boundaries in the $q$ or $r$ directions.

\subsubsection{Semi-discrete Stability}

The semi-discrete form with boundary and interface conditions can then be written,
\begin{align}
    \dv{\bm{u}}{t} = (\bm{P}_\perp + \bm{P}_\parallel) \bm{u},
\end{align}
where $\bm{P}_\parallel$ is discussed in \S\ref{sec:sub:numerical parallel operator}. Since there is no change to the parallel operator from the previous work which would change the analysis \cite{muir_provably_2025}, therefore the results of \cite{muir_provably_2025} can be used to show that the negative semi-definiteness of the parallel operator hold. Hence, we continue the discussion focusing on the perpendicular operator.

Analysis from here-on out is performed in $(q,r)$-logical coordinates. We now introduce a number of matrices which will assist with the analysis by splitting $\bm{P}_\perp$ into boundary and interior components. First consider the matrices,
\begin{align}
    B_{qI} = \begin{bmatrix} \bm{e}_{n_q}\bm{e}_{n_q}^T & \bm{0}_{n_q} \\ \bm{0}_{n_q} & -\bm{e}_{1_q}\bm{e}_{1_q}^T \end{bmatrix}, \qquad 
    B_{qB} = \begin{bmatrix} \bm{e}_{1_q}\bm{e}_{1_q}^T & \bm{0}_{n_q} \\ \bm{0}_{n_q} & -\bm{e}_{n_q}\bm{e}_{n_q}^T \end{bmatrix}, \qquad
    B_{rB} = \begin{bmatrix} \bm{e}_{n_q}\bm{e}_{n_q}^T & \bm{0}_{n_q} \\ \bm{0}_{n_q} & -\bm{e}_{1_q}\bm{e}_{1_q}^T \end{bmatrix}.
\end{align}
where $\bm{e}_{1_q}$ and $\bm{e}_{n_q}$ are elementary unit vectors of length $n_q$ with a 1 in the first and $n_q$th position and similarly $\bm{e}_{1_r}$ and $\bm{e}_{n_r}$.
From these we define,
\begin{align}
    \begin{array}{lll}
        \bm{B}_{qI} = B_{qI}\otimes I_{n_r}, &\qquad
            \bm{B}_{qB} = B_{qB}\otimes I_{n_r}, &\qquad
            \bm{B}_{rB} = I_2 \otimes I_{n_q} \otimes B_y    \\
        \bm{H}_{qB} = \bm{H}_q(\bm{B}_{qB}\bm{B}_{qB}^T), &\qquad 
            \bm{H}_{qI} = \bm{H}_q(\bm{B}_{qI}\bm{B}_{qI}^T), &\qquad
            \bm{H}_{rB} = \bm{H}_r(\bm{B}_{rB}\bm{B}_{rB}^T), \\
        \hat{\bm{H}}_q = \bm{H}_q - \bm{H}_{qB} - \bm{H}_{qI}, &\qquad
            \hat{\bm{H}}_r = \bm{H}_r - \bm{H}_{rB}, &\qquad
            \hat{\bm{H}} = \bm{H} - \bm{H}_{qI} - \bm{H}_{qB} - \bm{H}_{rB},
    \end{array}
\end{align}
such that $\bm{B}_{qI}$ extracts the values along the interface, $\bm{B}_{qB}$ the values along the $q$ boundaries and $\bm{B}_{rB}$ the $r$ boundary values.

To split $\bm{P}_\perp$ into components we will write,
\begin{align}
    \bm{H}(\bm{D}_\perp)_{interior} = -\bm{D}_q^T\bm{K}_q\hat{\bm{H}}_q\bm{D}_q - \bm{D}_r^T\bm{K}_r\hat{\bm{H}}_r\bm{D}_r - \bm{D}_q^T\hat{\bm{H}}\bm{K}_{qr}\bm{D}_r - \bm{D}_r^T\hat{\bm{H}}\bm{K}_{qr}\bm{D}_q - \bm{R}_q - \bm{R}_r.
\end{align}
Hence,
\begin{align}
    (\bm{P}_\perp)_{interior} = I_2 \otimes (\bm{D}_\perp)_{interior}.
\end{align}
The boundary terms and \acp{sat} can be collected to give,
\begin{align}\label{eq:perpendicular boundary operator}
    \begin{aligned}
    \bm{H}(\bm{P}_\perp)_B &= -\bm{D}_q^T\bm{K}_q\bm{H}\bm{D}_q - \bm{D}_r^T\bm{K}_r\hat{\bm{H}}\bm{D}_r - \bm{D}_q^T\hat{\bm{H}}\bm{K}_{qr}\bm{D}_r - \bm{D}_r^T\hat{\bm{H}}\bm{K}_{qr}\bm{D}_q + 
    \\&\qquad
        \bm{B}_q\bm{K}_q\bm{D}_q + \bm{B}_r\bm{K}_r\bm{D}_r + \bm{H}_r\bm{B}_q\bm{K}_{qr}\bm{D}_r + \bm{H}_q\bm{B}_r\bm{K}_{qr}\bm{D}_q +
    \\&\qquad
        \bm{H}\mathbf{SAT}_q + \bm{H}\mathbf{SAT}_r + \bm{H}\mathbf{SAT}_I.
    \end{aligned}
\end{align}

From here we note that $(\bm{P}_\perp)_{interior}$ contains only symmetric terms, and so is symmetric and negative definite by construction. It is therefore left for us to prove that $(\bm{P}_\perp)_B$ is symmetric and negative semi-definite.

\begin{theorem}\label{thrm:perpendicular energy bound}
    Consider the boundary operator \eqref{eq:perpendicular boundary operator} with Dirichlet \acp{sat} defined in \eqref{eq:Dirichlet SAT q} and \eqref{eq:Dirichlet SAT r}. Let $\bm{H}=\bm{H}_q\otimes\bm{H}_r$ where $\bm{H}_q$ and $\bm{H}_r$ are as defined in \eqref{eq:Curvilinear H} with $h_j>0$. If $\bm{g}_q=0$ and $\bm{g}_r=0$, and we have the penalties 
    \begin{align}
        \tau_{q,1}=-1, \quad \tau_{q,0}\ge\bm{K}_q, \qquad
        \tau_{r,1}=-1, \quad \tau_{r,0}\ge\bm{K}_r
    \end{align}
    then,
    \begin{align}
        \bm{P}_\perp = -\bm{H}^{-1}\bm{A}_\perp, \quad \bm{A}_\perp=\bm{A}_\perp^T, \quad \bm{v}^T\bm{A}_\perp\bm{v}\geq0, \quad \forall \bm{v}\in\bm{R}^{n_q\times n_r}.
    \end{align}
\end{theorem}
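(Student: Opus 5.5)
The plan is the standard energy method: multiply the semi-discrete operator by $\bm{H}$, collect the result as a quadratic form, and show it is symmetric and nonpositive. Set $\bm{A}_\perp := -\bm{H}\bm{P}_\perp$. Using the split $\bm{P}_\perp = (\bm{P}_\perp)_{interior} + (\bm{P}_\perp)_B$ from \eqref{eq:perpendicular boundary operator}, it suffices to handle the two pieces separately.

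\textbf{Interior part.} The interior contribution $-\bm{H}(\bm{P}_\perp)_{interior}$ is a sum of Gram-type terms plus the remainders $\bm{R}_q,\bm{R}_r\ge0$. I will regroup it as
\begin{align*}
\begin{bmatrix}\bm{D}_q\bm{v}\\ \bm{D}_r\bm{v}\end{bmatrix}^T
\begin{bmatrix}\hat{\bm{H}}\bm{K}_q & \hat{\bm{H}}\bm{K}_{qr}\\ \hat{\bm{H}}\bm{K}_{qr} & \hat{\bm{H}}\bm{K}_r\end{bmatrix}
\begin{bmatrix}\bm{D}_q\bm{v}\\ \bm{D}_r\bm{v}\end{bmatrix}.
\end{align*}
This is pointwise nonnegative because the $2\times2$ metric tensor $J\,(\nabla q,\nabla r)^T\kappa_\perp(\nabla q,\nabla r)$ is positive semidefinite, with determinant $\kappa_\perp^2 J^2(q_x r_y-q_y r_x)^2\ge0$. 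This part is therefore symmetric positive semidefinite.

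\textbf{Boundary part, reduction.} Apply the SBP identity \eqref{eq:1D First derivative SBP operator} and the cross-derivative identities \eqref{eq:curvilinear cross derivative sbp} to rewrite every boundary term in the form $\bm{B}\bm{K}\bm{D}$. The remaining boundary contribution then becomes a sum over each boundary face. On each face, with $\bm{g}=0$, there are three ingredients: the trace $\bm{v}_\Gamma$, the conormal flux $\bm{F}_\Gamma=(\bm{K}_q\bm{D}_q+\bm{K}_{qr}\bm{D}_r)\bm{v}$ restricted to the face (and the analogue for $r$), and the boundary-row Gram term $(\bm{D}\bm{v})^T\bm{H}_{qB}\bm{K}(\bm{D}\bm{v})$ that was removed from $\hat{\bm{H}}$.

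\textbf{Boundary part, choice of $\tau_{\cdot,1}$.} The SBP boundary term contributes $\bm{v}_\Gamma^T\bm{F}_\Gamma$, and the $\tau_{q,1}$ SAT term contributes $\tau_{q,1}\bm{F}_\Gamma^T\bm{v}_\Gamma$. Choosing $\tau_{q,1}=-1$ makes the SAT the transpose of the SBP term, which restores symmetry. It also leaves the combination $2\bm{v}_\Gamma^T\bm{F}_\Gamma$ in $\bm{v}^T\bm{H}\bm{P}_\perp\bm{v}$, up to a sign; that sign must be checked against the $-\tau_{q,0}$ penalty term. The $\tau_{q,0}$ term is manifestly symmetric, with form $\tau_{q,0}\,\bm{v}_\Gamma^T\bm{H}_q^{-1}\bm{K}_q\bm{v}_\Gamma$.

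\textbf{Boundary part, completing the square.} On each face, form the quadratic
\begin{align*}
h\,\bm{F}^T\bm{K}_q^{-1}\bm{F} - 2\bm{v}_\Gamma^T\bm{F} + \tau_{q,0}h^{-1}\bm{v}_\Gamma^T\bm{K}_q\bm{v}_\Gamma,
\end{align*}
where $h$ is the boundary quadrature weight. Its first term is borrowed from the Gram term $\bm{H}_{qB}$; the Cauchy–Schwarz bound $(\bm{D}_q\bm{v})^T\bm{K}_q(\bm{D}_q\bm{v})$ versus the flux including $\bm{K}_{qr}$ uses the Schur complement of the metric tensor. Nonnegativity then holds iff $\tau_{q,0}\ge\bm{K}_q$ in the appropriately scaled sense, matching the hypothesis. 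The $r$ boundaries are treated identically with $\tau_{r,0}\ge\bm{K}_r$. The interface SAT$_I$ is handled in the same way on the jumps $\hat{\bm{L}}_0\bm{v}$ and the averaged fluxes. At the corners, the boundary Gram terms are shared between the $q$ and $r$ faces, so each face may borrow only its share. Summing all faces gives $\bm{v}^T\bm{A}_\perp\bm{v}\ge0$, with $\bm{A}_\perp$ symmetric by the transpose structure of the SATs.

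\textbf{Main obstacle.} The hardest step is the completion of the square. The boundary flux contains the cross term $\bm{K}_{qr}\bm{D}_r$, and it must be absorbed by the boundary slice of the interior Gram form without double counting at corners or the interface. My first attempt is to split $\bm{H}_{qB}$ and $\bm{H}_{rB}$ evenly at corners and to use the positive semidefinite block structure of the metric tensor to bound the cross term.
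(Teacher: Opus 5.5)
Your route is the paper's route. The per-node quadratic in $\big(v_\Gamma,(\bm D_q\bm v),(\bm D_r\bm v)\big)$ on which you complete the square is exactly $\mathcal A_q$ (resp.\ $\mathcal A_r$, and $\mathcal G$ on the interface). Your Schur-complement bound of the borrowed Gram term by $hF^2/K_q$ is the appendix's congruence by $P$ and $P_2$. (Your determinant $\kappa_\perp^2J^2(q_xr_y-q_yr_x)^2$ is the correct one; the appendix's $(q_xr_x-q_yr_y)^2$ is a slip.) Away from corners your face argument is fine. Borrowing the full boundary-row Gram term, the form is nonnegative exactly when $\tau_{q,0}\ge1$ in the scaling of \eqref{eq:Dirichlet SAT q}. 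That is the paper's $\tau_{q,0}\ge[\bm K_q]_{j,k}$ in the normalisation of $\mathcal A_q$.

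\textbf{The gap is at the corners.} If each face borrows only half of the corner node's Gram term, the $q$-face form there reduces to $\tfrac{h}{2}F^2/K_q-2v_\Gamma F+\tau_{q,0}h^{-1}K_qv_\Gamma^2$. This is nonnegative only for $\tau_{q,0}\ge2$, twice the stated threshold, so your final summation fails under the hypothesis. A cleverer split does not recover the threshold when $K_{qr}\neq0$. Treat the corner jointly: let $\bm x=\big((\bm D_q\bm v),(\bm D_r\bm v)\big)$ at the node, $\mathcal K$ the local metric tensor with entries $K_q,K_{qr},K_r$, $\bm c=\big(\pm1/(h_1\Delta q),\pm1/(h_1\Delta r)\big)$ with signs from the outward normals, and $\tau_{q,0}=\tau_{r,0}=\tau$. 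The local form divided by the corner weight is then
\begin{align*}
(\bm x+v_\Gamma\bm c)^T\mathcal K(\bm x+v_\Gamma\bm c)+v_\Gamma^2\Big((\tau-1)\big(K_qc_1^2+K_rc_2^2\big)-2K_{qr}c_1c_2\Big).
\end{align*}
So at $\tau=1$ the local argument closes only where $K_{qr}c_1c_2\le0$. Since $c_1c_2$ flips sign between adjacent corners, it fails at one of any two adjacent corners where $K_{qr}$ has the same nonzero sign, as for a sheared grid.

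What is missing is a corner-enhanced penalty. Taking $\tau\ge2$ at corner nodes always suffices, because $2|K_{qr}c_1c_2|\le2\sqrt{K_qK_r}\,|c_1c_2|\le K_qc_1^2+K_rc_2^2$. The same joint treatment is needed where the interface meets the outer $r$-boundaries, since $\mathcal G$ and $\mathcal A_r$ share those nodes. The paper does not supply this either. Its $\hat{\bm H}$ removes corner nodes through both $\bm H_{qB}$ and $\bm H_{rB}$, and the lemmas for $\mathcal A_q$ and $\mathcal A_r$ each use the full block $C$ there, so it double counts exactly where you were worried.

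Finally, your symmetry claim needs the SAT to be literally $\bm H^{-1}\big(\bm H_r\bm B_q(\bm K_q\bm D_q+\bm K_{qr}\bm D_r)\big)^T$. As written in \eqref{eq:Dirichlet SAT q}, the $\bm K_{qr}$ part of $\bm H\,\mathbf{SAT}_q$ is $\tau_{q,1}\bm H_r\bm D_r^T\bm K_{qr}\bm B_q$. This differs from the transpose of $\bm H_r\bm B_q\bm K_{qr}\bm D_r$ by $[\bm H_r,\bm D_r^T]\bm K_{qr}\bm B_q$, which is nonzero in the $r$-closure rows, again at the corners.
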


The proof can be found in Appendix \ref{appendix:Perpendicular operator SPD}.
To facilitate the proof we introduce the matrices,
\begin{align}\label{eq:SAT q Dirichlet matrix}
    \mathcal{A}_q = \begin{bmatrix}
        \tau_{q,0}         & [\bm{K}_q]_{j,k}    & [\bm{K}_{qr}]_{j,k}    \\
        [\bm{K}_q]_{j,k}    & [\bm{K}_q]_{j,k}    & [\bm{K}_{qr}]_{j,k}    \\
        [\bm{K}_{qr}]_{j,k} & [\bm{K}_{qr}]_{j,k} & [\bm{K}_r]_{j,k}
    \end{bmatrix},\quad 
    \mathcal{A}_r = \begin{bmatrix}
        \tau_{r,0}         & [\bm{K}_r]_{j,k}    & [\bm{K}_{qr}]_{j,k}    \\
        [\bm{K}_r]_{j,k}    & [\bm{K}_r]_{j,k}    & [\bm{K}_{qr}]_{j,k}    \\
        [\bm{K}_{qr}]_{j,k} & [\bm{K}_{qr}]_{j,k} & [\bm{K}_q]_{j,k}
    \end{bmatrix}
\end{align}
and
\begin{align}\label{eq:SAT interface matrix}
    \mathcal{G} = \begin{bmatrix}
        \tau_{I,0} 					& \half [\bm{K}_q^-]_{n_q,i}			
            & \half [\bm{K}_q^+]_{1,i}	& \half [\bm{K}_{qr}^-]_{n_q,i} 	
            & \half [\bm{K}_{qr}^+]_{1,i} \\
        \half [\bm{K}_q^-]_{n_q,i}  	& [\bm{H}_q\bm{K}_q^-]_{n_q,i}	
            & 0                         & [\bm{H}_q\bm{K}_{qr}^-]_{n_q,i}
            & 0 \\
        \half [\bm{K}_q^+]_{1,i}    	& 0
            & [\bm{H}_q\bm{K}_q^+]_{1,i}& 0
            & [\bm{H}_q\bm{K}_{qr}^+]_{1,i} \\
        \half [\bm{K}_{qr}^-]_{n_q,i}   & [\bm{H}_q\bm{K}_{qr}^-]_{n_q,i}
            & 0                         & [\bm{H}_q\bm{K}_r^-]_{n_q,i} 	
            & 0 \\
        \half [\bm{K}_{qr}^+]_{1,i} 	& 0
            & [\bm{H}_q\bm{K}_{qr}^+]_{1,i} & 0
            & [\bm{H}_q\bm{K}_r^+]_{1,i}
    \end{bmatrix}
\end{align}

The matrix $\mathcal{A}_q$ collects all relevant terms along the $q$ Dirichlet boundaries at a particular $r$ value and $\mathcal{A}_r$ is defined similarly for the $r$ boundaries. The second matrix $\mathcal{G}$ collects all terms along the multi-block interface.

For $\mathcal{A}_q$ (and therefore $\mathcal{A}_r$) and $\mathcal{G}$ the following lemmas hold,

\begin{lemma}\label{lem:SAT q Dirichlet matrix}
    Consider the matrix $\mathcal{A}_q$ given by \eqref{eq:SAT q Dirichlet matrix}. If $\tau_{0,q}\geq [\bm{K}_q]_{j,k}$, then $\bm{v}^T\mathcal{A}_q\bm{v}\geq 0$, $\forall \bm{v}\in\R^3$.
\end{lemma}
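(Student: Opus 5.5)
The plan is to split $\mathcal{A}_q$ into a diagonal part carrying the penalty excess and a rank-structured part, and then to reduce the remaining quadratic form to the $2\times2$ diffusion tensor at a single grid point. Throughout I write, at a fixed grid point $(j,k)$, $a=[\bm{K}_q]_{j,k}$, $b=[\bm{K}_{qr}]_{j,k}$, $c=[\bm{K}_r]_{j,k}$ and $\tau=\tau_{q,0}$. Then
\begin{align*}
    \mathcal{A}_q = \begin{bmatrix} \tau-a & 0 & 0\\ 0&0&0\\ 0&0&0\end{bmatrix}
    + \begin{bmatrix} a & a & b\\ a & a & b\\ b & b & c\end{bmatrix}.
\end{align*}
The first matrix is positive semi-definite precisely because of the hypothesis $\tau_{q,0}\geq[\bm{K}_q]_{j,k}$. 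It therefore suffices to show that the second matrix is positive semi-definite.

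For $\bm{v}=(v_1,v_2,v_3)^T$, set $s=v_1+v_2$. Expanding the quadratic form of the second matrix gives
\begin{align*}
    a s^2 + 2 b s v_3 + c v_3^2 = \begin{bmatrix} s & v_3\end{bmatrix}\begin{bmatrix} a & b\\ b & c\end{bmatrix}\begin{bmatrix} s\\ v_3\end{bmatrix}.
\end{align*}
The $3\times 3$ problem thus reduces to positive semi-definiteness of the $2\times2$ pointwise curvilinear coefficient matrix $\begin{bmatrix} a & b\\ b& c\end{bmatrix}$.

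To establish this, I will use the definitions \eqref{eq:discrete curvilinear diffusion matrix}. Write $\bm{\xi}=(q_x,q_y)$ and $\bm{\eta}=(r_x,r_y)$, and define the weighted inner product $\langle \bm{\xi},\bm{\eta}\rangle_W = \xi_1\eta_1 [\bm{K}_x]_{j,k} + \xi_2\eta_2[\bm{K}_y]_{j,k}$, which is nonnegative because the diffusion coefficients satisfy $\bm{K}_x,\bm{K}_y\ge0$. In this notation
\begin{align*}
    a = J\langle\bm{\xi},\bm{\xi}\rangle_W,\qquad b = J\langle\bm{\xi},\bm{\eta}\rangle_W,\qquad c=J\langle\bm{\eta},\bm{\eta}\rangle_W .
\end{align*}
So the $2\times 2$ matrix is $J>0$ times a Gram matrix. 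Consequently $a,c\ge0$, and the Cauchy--Schwarz inequality for $\langle\cdot,\cdot\rangle_W$ gives $ac-b^2\ge0$, which yields positive semi-definiteness. Equivalently, one can write $a s^2+2bsv_3+cv_3^2 = J\,\|s\bm{\xi}+v_3\bm{\eta}\|_W^2\ge 0$ directly, and this is how I would present it.

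The only step needing care is this last one. It requires identifying $[\bm{K}_{qr}]_{j,k}$ with the weighted inner product of the metric gradients, including the coefficient $\kappa_\perp$ that appears in $\bm{K}_x,\bm{K}_y$ but is missing from the continuous definition of $\kappa_{qr}$. With the discrete definitions \eqref{eq:discrete curvilinear diffusion matrix} this identification is exact. Combining the two positive semi-definite pieces then gives $\bm{v}^T\mathcal{A}_q\bm{v}\geq0$ for all $\bm{v}\in\R^3$. The same argument, with the roles of $q$ and $r$ exchanged, applies verbatim to $\mathcal{A}_r$.
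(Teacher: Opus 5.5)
Your proof is correct. Its first half matches the paper's: the paper writes $\mathcal{A}_q=P\widetilde{\mathcal{A}}_qP^T$, where $P$ adds the second coordinate to the first. This gives exactly your identity $\bm{v}^T\mathcal{A}_q\bm{v}=(\tau_{q,0}-a)v_1^2+[s,\,v_3]\,C\,[s,\,v_3]^T$ with $s=v_1+v_2$ (in your notation $a,b,c$); your additive splitting states the same thing without the congruence.

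The routes differ on the $2\times2$ block $C$. The paper eliminates the off-diagonal entry, $P_2CP_2^T=\operatorname{diag}(a,\,c-b^2/a)$. It then checks the Schur complement by dividing by $J\kappa_\perp$ and applying Lagrange's identity; the printed numerator $(q_xr_x-q_yr_y)^2$ should read $(q_xr_y-q_yr_x)^2$. Because that step divides by $a$, it needs $[\bm{K}_q]_{j,k}>0$, and it tacitly assumes $\bm{K}_x=\bm{K}_y=\kappa_\perp$.

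Your Gram-matrix form $J\big[(sq_x+v_3r_x)^2[\bm{K}_x]_{j,k}+(sq_y+v_3r_y)^2[\bm{K}_y]_{j,k}\big]$ needs neither assumption. It covers $\bm{K}_x\neq\bm{K}_y$ and vanishing coefficients, and it sidesteps the typo.

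In exchange, the paper's $LDL^T$ step exhibits the pivots explicitly. In general $ac-b^2=J^2[\bm{K}_x]_{j,k}[\bm{K}_y]_{j,k}(q_xr_y-q_yr_x)^2$, which shows $C$ is strictly definite when the coefficients are positive and the map is nonsingular. The lemma only needs semi-definiteness, so your route loses nothing.
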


\begin{lemma}\label{lem:SAT r Dirichlet matrix}
    Consider the matrix $\mathcal{A}_r$ given by \eqref{eq:SAT q Dirichlet matrix}. If $\tau_{0,r}\geq [\bm{K}_r]_{j,k}$, then $\bm{v}^T\mathcal{A}_r\bm{v}\geq 0$, $\forall \bm{v}\in\R^3$.
\end{lemma}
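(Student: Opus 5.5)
The plan is to reduce the quadratic form of $\mathcal{A}_r$ to a nonnegative multiple of a square plus the quadratic form of a $2\times2$ block built from the local metric coefficients, and to show that block is positive semi-definite using the structure of $\bm{K}_q,\bm{K}_r,\bm{K}_{qr}$ from \eqref{eq:discrete curvilinear diffusion matrix}. This mirrors Lemma \ref{lem:SAT q Dirichlet matrix}, with the roles of $q$ and $r$ interchanged.

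Fix the grid index $(j,k)$ and abbreviate $a=[\bm{K}_r]_{j,k}$, $b=[\bm{K}_q]_{j,k}$, $c=[\bm{K}_{qr}]_{j,k}$ and $\tau=\tau_{r,0}$. For $\bm{v}=(v_1,v_2,v_3)^T$, expanding $\bm{v}^T\mathcal{A}_r\bm{v}$ and regrouping should give
\begin{align*}
    \bm{v}^T\mathcal{A}_r\bm{v} = (\tau - a)\,v_1^2 + a\,(v_1+v_2)^2 + 2c\,(v_1+v_2)\,v_3 + b\,v_3^2 .
\end{align*}
This identity is checked by matching the coefficients of $v_1^2$, $v_1v_2$, $v_1v_3$, $v_2^2$, $v_2v_3$ and $v_3^2$. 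Setting $w=v_1+v_2$, the last three terms form the quadratic form of $\begin{bmatrix} a & c\\ c & b\end{bmatrix}$ evaluated at $(w,v_3)$. The hypothesis $\tau\ge a$ makes the first term nonnegative. It therefore suffices to show $a\ge0$, $b\ge0$ and $ab-c^2\ge0$.

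This is the main step, and it is pointwise. Write $k_x=[\bm{K}_x]_{j,k}\ge0$ and $k_y=[\bm{K}_y]_{j,k}\ge0$, and use $J>0$. Then $a=J(r_x^2k_x+r_y^2k_y)\ge0$ and $b=J(q_x^2k_x+q_y^2k_y)\ge0$. For the determinant I would apply the weighted Cauchy--Schwarz inequality to the vectors $(\sqrt{k_x}\,q_x,\sqrt{k_y}\,q_y)$ and $(\sqrt{k_x}\,r_x,\sqrt{k_y}\,r_y)$. Expanding directly gives the Lagrange identity
\begin{align*}
    ab-c^2 = J^2 k_x k_y\,(q_x r_y - q_y r_x)^2 \ge 0 .
\end{align*}
In the isotropic case $k_x=k_y=\kappa_\perp$ this is just $J^2\kappa_\perp^2 (q_xr_y-q_yr_x)^2$.

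The only delicate point I anticipate is bookkeeping. One has to ensure that the entries appearing in $\mathcal{A}_r$ are the same local metric values as in \eqref{eq:discrete curvilinear diffusion matrix}, i.e. that $\bm{K}_x$, $\bm{K}_y$ and the metric factors are diagonal and evaluated at the same node. One also has to ensure that $\bm{K}_{qr}$ appearing twice in the off-diagonals is consistent with its symmetry $\bm{K}_{qr}=\bm{K}_{qr}^T$. Once those are confirmed, the decomposition above gives $\bm{v}^T\mathcal{A}_r\bm{v}\ge0$ for all $\bm{v}\in\R^3$.
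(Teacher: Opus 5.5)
Your proof is correct and follows the paper's route; the paper proves only the $\mathcal{A}_q$ case and says this one is identical. Your regrouping is exactly the paper's congruence $P\widetilde{\mathcal{A}}P^T$ with $w=P^T\bm{v}=(v_1,v_1+v_2,v_3)$, followed by positive semi-definiteness of the $2\times2$ metric block. Your determinant (Lagrange identity) check of that block is slightly cleaner than the paper's Schur-complement factorisation: it avoids dividing by the diagonal entry, it handles $\bm{K}_x\neq\bm{K}_y$, and it gives the correct term $(q_xr_y-q_yr_x)^2$ where the paper's appendix writes $(q_xr_x-q_yr_y)^2$.
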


\begin{lemma}\label{lem:SAT interface matrix}
    Consider the matrix $\mathcal{G}$ given in equation \eqref{eq:SAT interface matrix}. If $\tau_{I,0} \geq \frac{1}{4} ([\bm{K}_q^-]_{n_q,i}/[\bm{H}_q]_{n_q,i}, [\bm{K}_q^+]_{1,i}/[\bm{H}_q]_{1,i})$ then $\bm{v}^T\mathcal{G}\bm{v}$, $\forall \bm{v}\in\R^5$.
\end{lemma}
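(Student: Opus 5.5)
The plan is a completing-the-square (Schur complement) argument that eliminates the four derivative-type unknowns in favour of the interface jump unknown. Write $\bm{v}=(v_0,a,b,c,d)^T\in\R^5$, where $v_0$ pairs with the penalty $\tau_{I,0}$, $(a,c)$ are the entries on the $-$ side (rows/columns 2 and 4), and $(b,d)$ are the entries on the $+$ side (rows/columns 3 and 5). To lighten notation, set $h^-=[\bm{H}_q]_{n_q,i}$ and $h^+=[\bm{H}_q]_{1,i}$. Also introduce the $2\times2$ metric matrices
\begin{align*}
    M^\pm = \begin{bmatrix} K_q^\pm & K_{qr}^\pm \\ K_{qr}^\pm & K_r^\pm \end{bmatrix},
\end{align*}
evaluated at the corresponding interface node. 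Expanding $\mathcal{G}$ gives
\begin{align*}
    \bm{v}^T\mathcal{G}\bm{v} = \tau_{I,0}v_0^2 + v_0\,\bm{m}^{-T}\bm{x}^- + h^-\bm{x}^{-T}M^-\bm{x}^- + v_0\,\bm{m}^{+T}\bm{x}^+ + h^+\bm{x}^{+T}M^+\bm{x}^+,
\end{align*}
where $\bm{x}^-=(a,c)^T$, $\bm{x}^+=(b,d)^T$, and $\bm{m}^\pm=M^\pm\bm{e}_1$ is the first column of $M^\pm$. The lower-right $4\times4$ block therefore decouples into the two sides, and each side couples to $v_0$ only through its own first column.

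The first step is to show that $M^\pm$ is positive semi-definite. The diagonal entries satisfy $K_q,K_r\ge0$. From the definitions in \eqref{eq:discrete curvilinear diffusion matrix},
\begin{align*}
    K_qK_r-K_{qr}^2 = J^2\kappa_\perp^2\left(|\nabla q|^2|\nabla r|^2-(\nabla q\cdot\nabla r)^2\right)\ge0
\end{align*}
by Cauchy--Schwarz. This is where the actual structure of the metric terms enters, and I would check carefully that the diagonal-matrix form $J(q_x^2\bm{K}_x+q_y^2\bm{K}_y)$ etc.\ still gives a Gram-type matrix pointwise. With $\bm{K}_x\ne\bm{K}_y$ the identity becomes a weighted Cauchy--Schwarz with weights $\kappa_x,\kappa_y\ge0$, which still holds.

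The second step minimises each side's quadratic in $\bm{x}^\pm$ for fixed $v_0$. For PSD $M$ with $\bm{m}=M\bm{e}_1\in\operatorname{range}(M)$,
\begin{align*}
    \min_{\bm{x}}\left(h\,\bm{x}^TM\bm{x}+v_0\bm{m}^T\bm{x}\right) = -\frac{v_0^2}{4h}\,\bm{m}^TM^{\dagger}\bm{m} = -\frac{v_0^2}{4h}\,\bm{e}_1^TMM^\dagger M\bm{e}_1 = -\frac{v_0^2}{4h}K_q.
\end{align*}
The minimiser is attained at $\bm{x}=-\tfrac{v_0}{2h}\bm{e}_1$, so no inverse is actually needed: one completes the square directly as $h(\bm{x}+\tfrac{v_0}{2h}\bm{e}_1)^TM(\bm{x}+\tfrac{v_0}{2h}\bm{e}_1)$.

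The third step adds the two sides, which yields
\begin{align*}
    \bm{v}^T\mathcal{G}\bm{v} \ge v_0^2\left(\tau_{I,0}-\frac{[\bm{K}_q^-]_{n_q,i}}{4h^-}-\frac{[\bm{K}_q^+]_{1,i}}{4h^+}\right)\ge0.
\end{align*}
So the hypothesis of the lemma must be read as $\tau_{I,0}\ge\tfrac14\big([\bm{K}_q^-]_{n_q,i}/[\bm{H}_q]_{n_q,i}+[\bm{K}_q^+]_{1,i}/[\bm{H}_q]_{1,i}\big)$, with a sum rather than a maximum, and the conclusion as $\bm{v}^T\mathcal{G}\bm{v}\ge0$.

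The main obstacle is this interpretation of the penalty bound. With only the maximum of the two ratios, the choice $\bm{x}^\pm=-\tfrac{v_0}{2h^\pm}\bm{e}_1$ gives a negative value whenever both ratios are positive, so the maximum reading is false in general. I would therefore prove the sum version, which matches the standard interface SAT bound. I would remark that twice the maximum suffices, since it dominates the sum.
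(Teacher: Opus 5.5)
Your proof is correct and is essentially the paper's argument. The paper writes $\mathcal{G}=P\widetilde{\mathcal{G}}P^T$ with the first row of $P$ equal to $(1,\tfrac{1}{2h^-},\tfrac{1}{2h^+},0,0)$, which is exactly your shift $\bm{x}^\pm\mapsto\bm{x}^\pm+\tfrac{v_0}{2h^\pm}\bm{e}_1$; done without the paper's typos, this leaves the same residual $\tau_{I,0}-\tfrac{1}{4}([\bm{K}_q^-]_{n_q,i}/h^-+[\bm{K}_q^+]_{1,i}/h^+)$, so your sum condition is what that computation supports and the paper's closing ``$\max$'' condition is false, as your test vector shows. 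You also prove positive semi-definiteness of the $2\times2$ metric blocks via a weighted Cauchy--Schwarz bound, which the paper merely asserts for its matrix $C$.
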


The proofs for lemmas \ref{lem:SAT q Dirichlet matrix} and \ref{lem:SAT interface matrix} can also be found in Appendix \ref{appendix:Perpendicular operator SPD}, we do not prove lemma \ref{lem:SAT r Dirichlet matrix} since the proof is identical to \ref{lem:SAT q Dirichlet matrix}.

\section{Fully-discrete analysis}\label{sec:Discrete analysis}

Details for the fully discrete form are found in \cite{muir_provably_2024-1}. However, we briefly describe it here for completeness.

Let $\Delta t_l = t_{l+1} - t_l$ be the time step size where $0\leq t_l < t_{l+1} \leq T$ such that the discrete solution at time $t=t_l$ is denoted $\bm{u}^l$.
Operator splitting is used to separate the slow (perpendicular) and fast (parallel) time scales. For a single time step the perpendicular diffusion is solved first by $\theta$-method where for $\theta=\half$ we have,
\begin{align}
    \left(\bm{I} - \half\Delta t\bm{P}_\perp\right)\bm{u}^{l+\half} = \left( \bm{I} + \half\Delta t\bm{P}_\perp \right)\bm{u}^l + \half\Delta t(\bm{F}^{l} + \bm{F}^{l+\half}).
\end{align}
where $\bm{F}^l = \begin{bmatrix}f(q_1, r_1, t^l), f(q_2, r_1, t^l), \dots, f(q_n, r_n, t^l)\end{bmatrix}^T$ is a forcing term.
The parallel diffusion is then advanced by implicit midpoint rule,
\begin{align}
    \bm{w}_f^{l+\half} &= \bm{P}_f \bm{u}^{l+\half}, \qquad 
    \bm{w}_b^{l+\half} = \bm{P}_f \bm{u}^{l+\half}, \\
    \bm{u}^{l+1} &= \bm{u}^{l+\half} - \tau_\parallel \kappa_\parallel \bm{H}^{-1} \left( \bm{u}^{l+1} - \half\left[\bm{w}^{l+\half}_f + \bm{w}^{l+\half}_b\right] \right).
\end{align}

The following theorem states the energy for the fully discrete form.
\begin{theorem}\label{thrm:Stability of discrete problem}
   Consider The fully discrete form of \eqref{eq:ADE cont laplace form} with $\mathbf{P}_\perp$ given by \eqref{eq:perpendicular diffusion SAT} and $\mathbf{P}_\parallel$ given by \eqref{eq:Numerical parallel opeartor}. Then,
   \begin{align}
       \bm{u}^T(\bm{H}\bm{P}_\perp + (\bm{H}\bm{P}_\perp)^T)\bm{u} \leq 0 \qquad \bm{u}^T(\bm{H}\bm{P}_\parallel + (\bm{H}\bm{P}_\parallel)^T)\bm{u} \leq 0 \qquad \forall\bm{u}\in\R^{n_x n_y}.
   \end{align}
\end{theorem}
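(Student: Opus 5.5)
The plan is to treat the two inequalities separately and reduce each to a result already in hand. Neither needs the time-stepping scheme. The statement is a property of the semi-discrete operators, and the fully discrete energy estimate will follow from it by the standard argument for the $\theta$-method and the implicit midpoint rule.

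For the perpendicular part, I would invoke Theorem \ref{thrm:perpendicular energy bound}. With the stated penalties, $\bm{P}_\perp = -\bm{H}^{-1}\bm{A}_\perp$, where $\bm{A}_\perp$ is symmetric positive semi-definite. Then $\bm{H}\bm{P}_\perp = -\bm{A}_\perp$, and $(\bm{H}\bm{P}_\perp)^T = -\bm{A}_\perp^T = -\bm{A}_\perp$, so
\begin{align*}
    \bm{u}^T(\bm{H}\bm{P}_\perp + (\bm{H}\bm{P}_\perp)^T)\bm{u} = -2\,\bm{u}^T\bm{A}_\perp\bm{u} \leq 0 .
\end{align*}
To be explicit about where $\bm{A}_\perp$ comes from, I would split it as $\bm{A}_\perp = -\bm{H}(\bm{P}_\perp)_{interior} - \bm{H}(\bm{P}_\perp)_B$.

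The interior piece is built from terms of the form $\bm{D}^T(\cdot)\bm{D}$ with nonnegative weights, together with the remainders $\bm{R}_q,\bm{R}_r\geq0$. Hence it is symmetric positive semi-definite.

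The boundary piece \eqref{eq:perpendicular boundary operator} would be handled in three steps.
\begin{itemize}
    \item First, use the SBP identities \eqref{eq:curvilinear cross derivative sbp} to cancel the non-symmetric boundary terms $\bm{B}_q\bm{K}_q\bm{D}_q$, $\bm{H}_r\bm{B}_q\bm{K}_{qr}\bm{D}_r$, and so on, against the $\tau_{\cdot,1}=-1$ parts of the SATs. This leaves a symmetric form.
    \item Second, gather the remaining quadratic terms pointwise along each boundary or interface node. Each node then contributes a quadratic form in the vector of boundary value, normal flux and tangential flux.
    \item Third, apply Lemmas \ref{lem:SAT q Dirichlet matrix}, \ref{lem:SAT r Dirichlet matrix} and \ref{lem:SAT interface matrix} to conclude that each local form is nonnegative under the stated bounds on $\tau_{q,0}$, $\tau_{r,0}$ and $\tau_{I,0}$.
\end{itemize}

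For the parallel part, $\bm{H}\bm{P}_\parallel = -\tau_\parallel\kappa_\parallel(\bm{I} - \half[\bm{P}_f+\bm{P}_b])$. Since $\tau_\parallel\kappa_\parallel\ge0$, it suffices to show
\begin{align*}
    \bm{u}^T\bm{u} - \half\bm{u}^T(\bm{P}_f+\bm{P}_b)\bm{u} \ge 0 .
\end{align*}
Following \cite{muir_provably_2025}, I would use Cauchy–Schwarz together with the stability of the interpolation operators, $\|\bm{P}_f\bm{u}\|\le\|\bm{u}\|$ and $\|\bm{P}_b\bm{u}\|\le\|\bm{u}\|$. These give $|\bm{u}^T\bm{P}_{f,b}\bm{u}| \le \|\bm{u}\|^2$, so the bracket is nonnegative.

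The nonlinear dependence of $\tau_\parallel$ on $\bm{u}$ does not matter here. At each fixed state it is a nonnegative scalar, because $\alpha>0$ and an even $\beta$ (or, more generally, a nonnegative base) keeps it nonnegative.

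I expect the main obstacle to be the bookkeeping in the boundary operator. The cross-derivative terms $\bm{K}_{qr}$ must be matched correctly between the SAT terms and the SBP boundary terms so that every non-symmetric piece cancels exactly. The corner nodes, where $q$ and $r$ boundaries meet, must also be split consistently between $\mathcal{A}_q$ and $\mathcal{A}_r$. If the corner contributions do not fall cleanly into those matrices, I would first try using the diagonal quadrature weights $h_1, h_n$ to divide the corner terms between the two directions.
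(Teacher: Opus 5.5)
Your proposal is correct and follows the paper's route. The perpendicular inequality comes from Theorem~\ref{thrm:perpendicular energy bound} via $\bm{H}\bm{P}_\perp=-\bm{A}_\perp$, and the parallel one, which the paper delegates to Theorem~3.5 of \cite{muir_provably_2025}, is supplied by your Cauchy--Schwarz argument with norm-nonincreasing interpolation operators (correctly taken in the Euclidean norm, since $\bm{H}$ cancels in $\bm{H}\bm{P}_\parallel$). One small correction to your optional elaboration: the interior cross terms $\bm{D}_q^T\hat{\bm{H}}\bm{K}_{qr}\bm{D}_r$ do not carry nonnegative weights, so interior semi-definiteness has to come from pairing them with the $\bm{K}_q$ and $\bm{K}_r$ terms at each node and using positive semi-definiteness of the $2\times2$ metric block $C$ from the proof of Lemma~\ref{lem:SAT q Dirichlet matrix}.
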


For proof we refer to Theorem 3.5 in ~\cite{muir_provably_2025}. One can replace the operators with their variable coefficient forms and see the result holds if one includes the proof for theorem \eqref{thrm:perpendicular energy bound}.
In practice an interpolating polynomial is constructed in each domain using cubic Hermite splines from the \textit{CubicHermiteSpline.jl} Julia package \cite{noauthor_liuyxppcubichermitesplinejl_nodate}.

\section{Code verification and numerical experiment}\label{sec:Verification}

In this section we perform numerical experiments to verify the accuracy and stability of the method.
We first test by manufactured solution on the perpendicular problem only, with no parallel map. We then introduce the parallel map though a magnetic field with a single magnetic island. All curvilinear tests are performed on a circular domain, which is introduced in \S\ref{sec:sub:Convergence in multiblock curvilinear geometry}.

In all cases, errors are determined using the norm defined by the matrix $H$,
\begin{align}\label{eq:relative error}
    \text{err}_{\text{rel}} = \frac{\|\bm{u} - \bm{u}^\star\|_{\bm{H}}}{\|\bm{u}^\star\|_{\bm{H}}},
\end{align}
where $\bm{u}^\star$ is the ``exact solution'' and $\|\cdot\|_{\bm{H}}$ is defined in \eqref{eq:H inner product norm}.

Finally, we perform an experiment on a bean-shaped domain using the Stepped Pressure Equilibrium Code (SPEC) to generate the magnetic geometry.

\subsection{Convergence in multiblock curvilinear geometry}\label{sec:sub:Convergence in multiblock curvilinear geometry}

We now perform convergence tests in the multi-block and curvilinear geometry. Consider a circular domain with outer boundary defined by, $(x(\theta),y(\theta)) = (\cos(\theta),\sin(\theta))$.
To apply the \ac{sbp} finite difference discretisation, the grid is broken into 5 subdomains as shown in Figure \ref{fig:Circle dilation effect}.

\begin{figure}[H]
    \centering
    \includegraphics[width=0.7\linewidth]{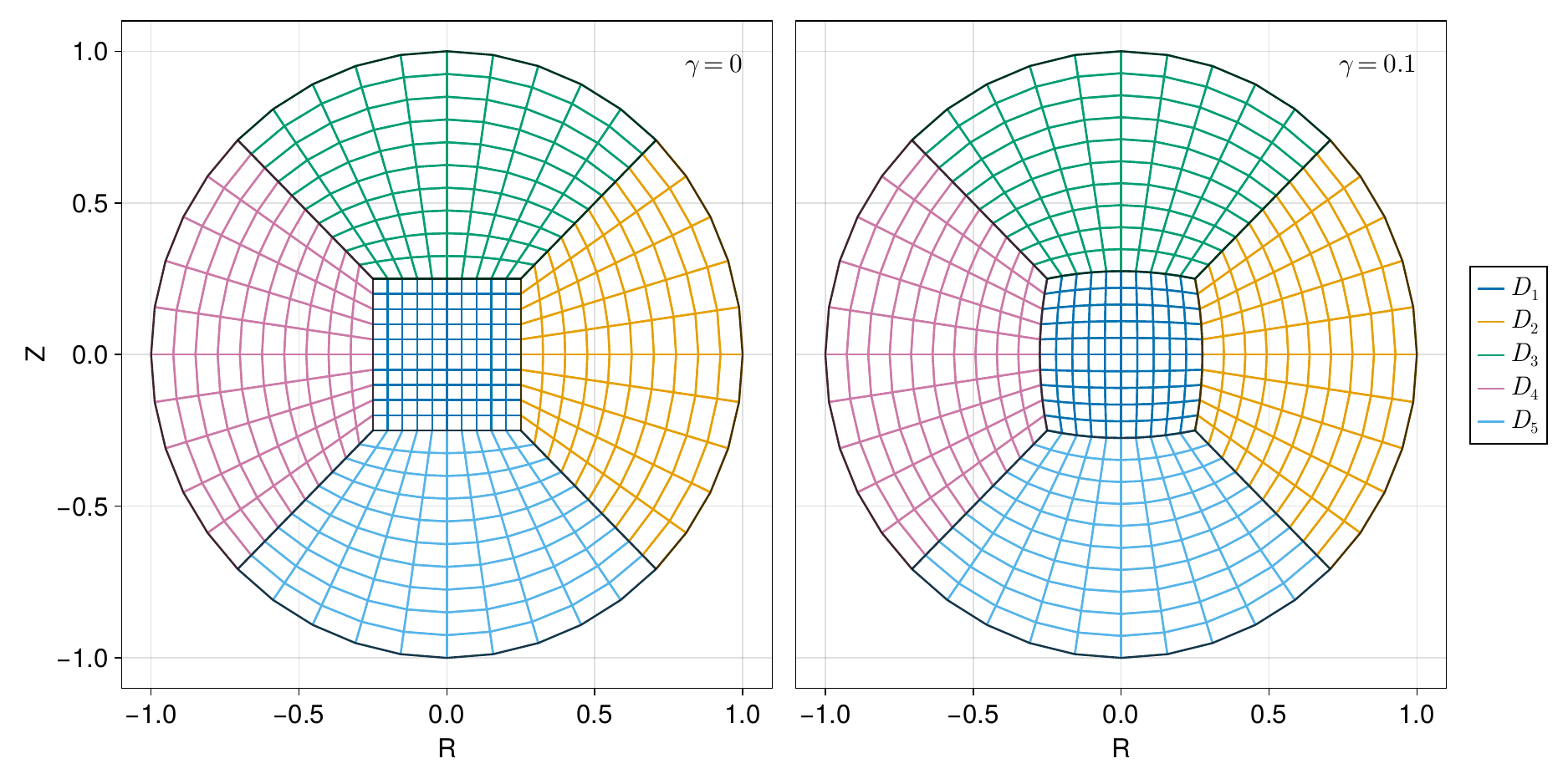}
    \caption{Effect of the dilation parameter on the mesh geometry. The value for the dilation parameter is listed to the top right of the subplot, with $\gamma=0$ being no dilation and $\gamma=0.1$ having a slight dilation.}
    \label{fig:Circle dilation effect}
\end{figure}

To add to the investigations conducted in \cite{muir_efficient_2023}  we include a term in the parametrisation of the interior domain to dilate the boundaries. Having a field aligned grid can reduce pollution, therefore we wish to investigate if dilating the interior (to align it more to field lines) improves convergence or reduces errors. The boundary functions for constructing the interior domain are,
\begin{align}\label{eq:Transfinite interpolation square}
        \begin{aligned}
        c_1(u) &= [-0.25,-0.25]^T + u([0.25,-0.25]^T - [-0.25,-0.25]^T) + u(1-u)[0.0,-\gamma]^T, \\
        c_2(v) &= [0.25,-0.25]^T + v([0.25,0.25]^T - [0.25,-0.25]^T) + v(1-v)[\gamma,0.0]^T, \\
        c_4(u) &= [-0.25,-0.25]^T + v([-0.25,0.25]^T - [-0.25,-0.25]^T) + v(1-v)[-\gamma,0.0]^T, \\
        c_3(v) &= [-0.25,0.25]^T + u([0.25,0.25]^T - [-0.25,0.25]^T) + u(1-u)[0.0,\gamma]^T, \\
    \end{aligned}
\end{align}
where $\gamma$ dilates the boundaries of the domain as shown in Figure \ref{fig:Circle dilation effect}.

\subsubsection{Manufactured solution}

To test the perpendicular operator only, no magnetic field is applied, so that $\mathcal{P}_\parallel=0$. We choose the analytic solution,
\begin{align}
    \tilde{u}(x,y,t) = \cos(2\pi\omega_t t) \sin(2\pi\omega_R R)\sin(2\pi\omega_Z Z),
\end{align}
with parameters shown in table \ref{tab:MMS parameters}.
The source term and initial condition are given by,
\begin{align}
    F(x,y,t) = \pdv{\tilde{u}}{t} - \nabla\cdot(\kappa_\perp \nabla\tilde{u}), \qquad u(x,y,0) = \tilde{u}(x,y,0).
\end{align}
On the outer boundary we apply the Dirichlet boundary condition $u(x,y,t) = \tilde{u}(x_b,y_b,t)$ where $x_b^2+y_b^2=1$. Continuity of the solution and derivatives across domain boundaries are enforced by interface \acp{sat} as detailed in \S\ref{sec:sub:sub:multiblock and boundary conditions}. We run convergence tests on the non-dilated and dilated circles, with dilation parameter $\gamma$ specified in the legend.

\begin{table}[H]
    \centering
    \caption{Parameters for convergence tests in Figure \ref{fig:MMS circle}.}
    \begin{tabular}{cccc}\hline
         Test        & $\omega_x$   & $\omega_y$    & $\omega_t$ \\\hline
         Spatial     & $5.5$        & $7$           & $3$ \\
    \end{tabular}
    \label{tab:MMS parameters}
\end{table}

Results for the method of manufactured solutions \cite{roache_code_2001} are shown in Figure \ref{fig:MMS circle}. Both second and fourth order methods achieve optimal convergence, though we note that the fourth order requires higher resolution before convergence begins. This is due to the relatively wide boundary stencil for the fourth order requiring 6 grid points per-side.

\begin{figure}[H]
    \centering
    \begin{tikzpicture}
    \begin{groupplot}[
            group style={
            group size=2 by 1,
            x descriptions at=edge bottom,
            vertical sep    =10pt,
            horizontal sep  =10pt
        },
        %
        enlargelimits=true,
        height=0.4\textwidth,
    ]
    \nextgroupplot[
        legend entries={
                $\gamma = 0$,
                $\gamma = 0.1$,
            },
        legend style={
                at={([yshift=4pt]1.0,1.1)},
                anchor=north
            },
        legend columns=3,
        ymode=log,xmode=log,
        xtick={11,21,31,41,51,61,71},
        xticklabels={11,21,31,41,51,61},
        xlabel=Subdomain $n_q\equal n_r$,
        ylabel=log(relative error),
        grid
        ]

        \addplot table[x index=0,y index=1,col sep=comma,skip first n=1]{./data/MMS_Circle.csv};
        \addplot table[x index=0,y index=2,col sep=comma,skip first n=1]{./data/MMS_Circle.csv};

        \node[] at (rel axis cs: 0.75,0.9) {\large 2nd order};

        \logLogSlopeTriangle{0.6}{0.5}{0.89}{2.5}{blue,dashed};
        
    \nextgroupplot[
        ymode=log,xmode=log,
        yticklabel pos=right,
        xtick={11,21,31,41,51,61,71},
        xticklabels={11,21,31,41,51,61},
        xlabel=Subdomain $n_q\equal n_r$,
        ylabel=log(relative error),
        grid
        ]

        \addplot table[x index=0,y index=3,col sep=comma,skip first n=1]{./data/MMS_Circle.csv};
        \addplot table[x index=0,y index=4,col sep=comma,skip first n=1]{./data/MMS_Circle.csv};

        \node[] at (rel axis cs: 0.75,0.9) {\large 4th order};

        \logLogSlopeTriangle{0.7}{0.4}{0.85}{4}{blue,dashed};

    \end{groupplot}
    \end{tikzpicture}
    \caption{Convergence for the perpendicular only manufactured solution on the circular domains with dilation of the interior domain given by $\gamma$. The effect of $\gamma$ is illustrated by Figure \ref{fig:Circle dilation effect}.
    \textbf{Left}: Second order.
    \textbf{Right}: Fourth order.}
    \label{fig:MMS circle}
\end{figure}
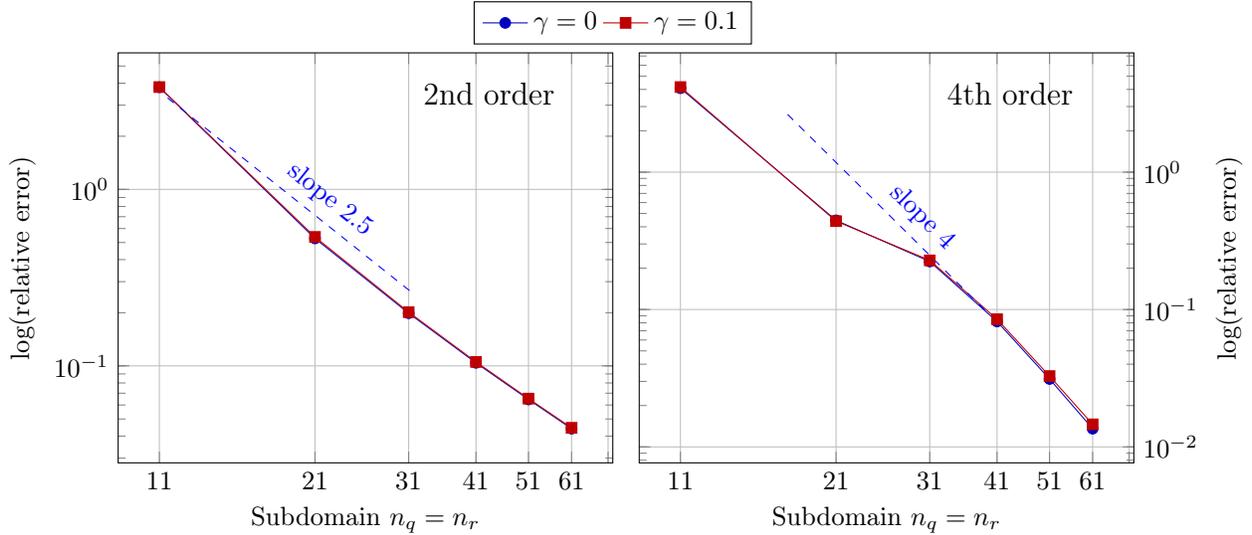

\subsubsection{Self convergence with island}

We now perform a self convergence test on the circular domain with a parallel map, by comparing lower resolution runs to a high resolution reference solution. We choose the magnetic field presented in \cite{chacon_asymptotic-preserving_2014},
\begin{align}\label{eq:Circle self convergence magnetic field}
    \psi(r,\theta) = (r - r_1)^2 + \delta (r - 0.5) (1-r) \cos(\theta), \qquad B=\grad\psi\times \bm{z}
\end{align}
for which we choose $\delta = 0.05$ and $r_1=0.7$. The computational grid is in $(x,y)$ coordinates, so to construct the parallel map we first map the grid to the field coordinates $(r,\theta)$, perform the field line tracing, then map the resulting point back to $(x,y)$. 
The initial condition and boundary conditions are,
\begin{align}
    u(x,y,0) = 0, \qquad\text{and}\qquad   u(r,\theta,t) = 0, \quad r = 1.
\end{align}
Furthermore, the source term,
\begin{align}\label{eq:Single island self convergence source term}
    Q(r,t) = 4(1-r^2)^8,
\end{align}
is added to avoid the trivial solution. The reference solution is computed using $n_q\times n_r=101\times 101$ grid points in each domain. 

In Figure \ref{fig:circle self convergence} we show the results for convergence tests with $\kappa_\parallel=10^6$ and $10^9$ in a case with the boundary dilation set to $\gamma=0.1$. Time steps for each run are scaled with the grid resolution so that $\Delta t = 10^{-2} / (n_r - 1)^2$.

Grid packing is used to increase the resolution of the solution near the separatrix, with packing function
\begin{align}
    u(s) = r_s + \alpha \sinh( \arcsinh\left(\frac{1 - r_s}{\alpha}\right)s + \arcsinh\left(\frac{-r_s}{\alpha}\right)(1-s) )
\end{align}
where we choose $\alpha = 0.1$ and here $u$ is the variable input to the transfinite interpolation functions (for example equation \eqref{eq:Transfinite interpolation square} for the interior domain). The parameter $\alpha$ controls the tightness of the packing, we note that smaller values are valid, but $\approx0.05$ will fail in the parallel mapping step, likely due to the triangulation required in the cubic Hermite interpolation.

\begin{figure}[H]
    \centering
    \begin{tikzpicture}
    \begin{groupplot}[
            group style={
            group size=2 by 1,
            x descriptions at=edge bottom,
            vertical sep    =10pt,
            horizontal sep  =10pt
        },
        %
        enlargelimits=true,
        height=0.4\textwidth,
        cycle multiindex* list={
            {blue!75!black,     mark=o, mark size=3 pt, solid},
            {red!75!black,      mark=o, mark size=3 pt, solid},
            {blue!75!black,     mark=x, mark size=5 pt, dashed},
            {red!75!black,      mark=x, mark size=5 pt, dashed},
        }
    ]
    \nextgroupplot[
        legend entries={
                $\kappa_\parallel = 10^6$,
                $\kappa_\parallel = 10^9$,
            },
        legend style={
                at={([yshift=4pt]1.0,1.1)},
                anchor=north,
                /tikz/every even column/.append style={column sep=0.5cm}
            },
        legend columns=2,
        ymode=log,xmode=log,
        xtick={105,155,205,255,305,355,405},
        xticklabels={21,31,41,51,61,71,81},
        xlabel=Subdomain $n_q\equal n_r$,
        ylabel=log(relative error),
        grid
        ]

        \addplot table[x index=0,y index=1,col sep=comma,skip first n=1]{./data/SISC_O2_EXP6_gamma0.0_g0b0.25_alpha0.1.csv};
        \addplot table[x index=0,y index=1,col sep=comma,skip first n=1]{./data/SISC_O2_EXP9_gamma0.0_g0b0.25_alpha0.1.csv};

        \node[] at (rel axis cs: 0.75,0.9) {\large 2nd order};

        \logLogSlopeTriangle{0.6}{0.4}{0.91}{1}{blue,dashed};
        \logLogSlopeTriangle{0.93}{0.3}{0.6}{2}{blue,dashed};
        
    \nextgroupplot[
        ymode=log,xmode=log,
        yticklabel pos=right,
        xtick={105,155,205,255,305,355,405},
        xticklabels={21,31,41,51,61,71,81},
        xlabel=Subdomain $n_q\equal n_r$,
        ylabel=log(relative error),
        grid
        ]

        \addplot table[x index=0,y index=1,col sep=comma,skip first n=1]{./data/SISC_O4_EXP6_gamma0.0_g0b0.25_alpha0.1.csv};
        \addplot table[x index=0,y index=1,col sep=comma,skip first n=1]{./data/SISC_O4_EXP9_gamma0.0_g0b0.25_alpha0.1.csv};

        \node[] at (rel axis cs: 0.75,0.9) {\large 4th order};

        \logLogSlopeTriangle{0.7}{0.2}{0.8}{2}{blue,dashed};
        \logLogSlopeTriangle{0.96}{0.15}{0.55}{4}{blue,dashed};

    \end{groupplot}
    \end{tikzpicture}
    \caption{Self convergence test in the five block circle test with magnetic field given by \eqref{eq:Circle self convergence magnetic field} and no boundary dilation (i.e. $\gamma=0$). The fourth order method outperforms the second order case in terms of relative errors, improving them by an order of magnitude.
    The average convergence rates of the second order cases for each $\kappa_\parallel$ are $\sim1.2$ and $\sim 1$. 
    For the fourth order cases the average rates are $\sim1.7$ and $\sim1.8$. Note that in the fourth order cases the rate of convergence accelerates in the final stages to $\sim4$th order. Given the lower error and faster convergence rates we see that higher order is preferable.}
    \label{fig:circle self convergence}
\end{figure}
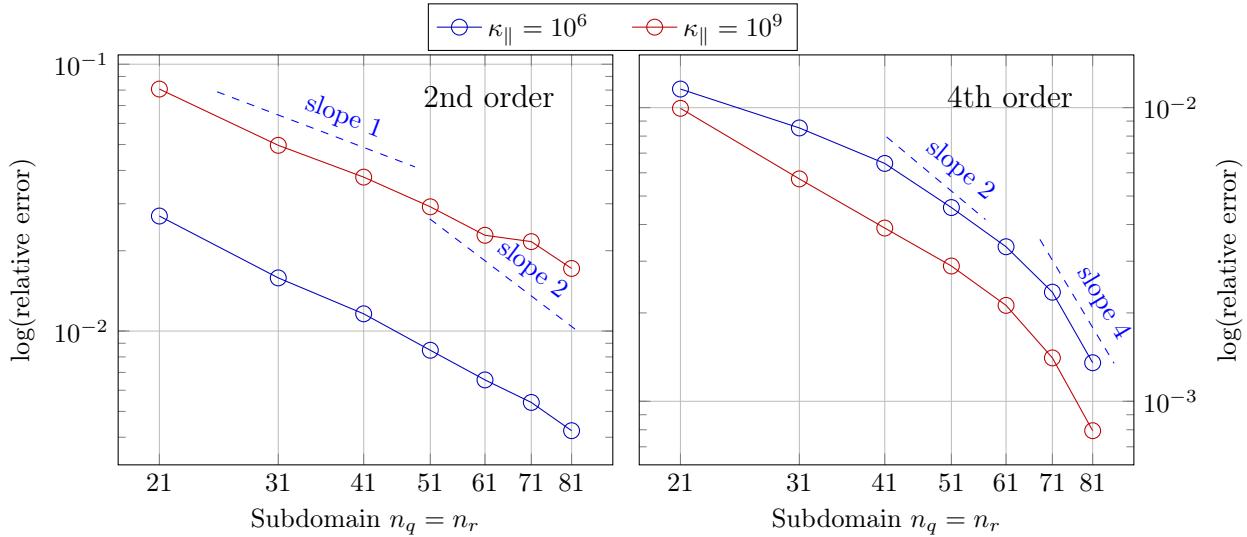

Convergence rates begin at first order and accelerate as the grid resolution is increased in both the second and fourth order cases. Furthermore, the errors are an order of magnitude lower in the fourth order case, which also increases in convergence rate more-so than the second order, achieving fourth order convergence at the final stages. This illustrates the need for higher order methods when solving the anisotropic diffusion equation in curvilinear geometries.
We note that only the convergence of the $\gamma=0$ case (no dilation) is shown, since the $\gamma=0.1$ case had higher errors. This suggests that deforming the interior grid to more closely align with the field lines may not be beneficial for reducing errors.

In Figure \ref{fig:Circular single island line plot} we show the solution along chords from $-0.9\leq x \leq -0.5$ at $y=0$ (the thickest part of the island), for varying grid resolutions and diffusion coefficients. In both cases it can be seen that the $51\times51$ resolution case performs qualitatively well compared to the $81\times81$. However, the solution near the separatrix in the $\kappa_\parallel=10^9$ case is still rough due to the rate at which flattening occurs.

\begin{figure}[H]
    \centering
    \includegraphics[width=0.7\linewidth]{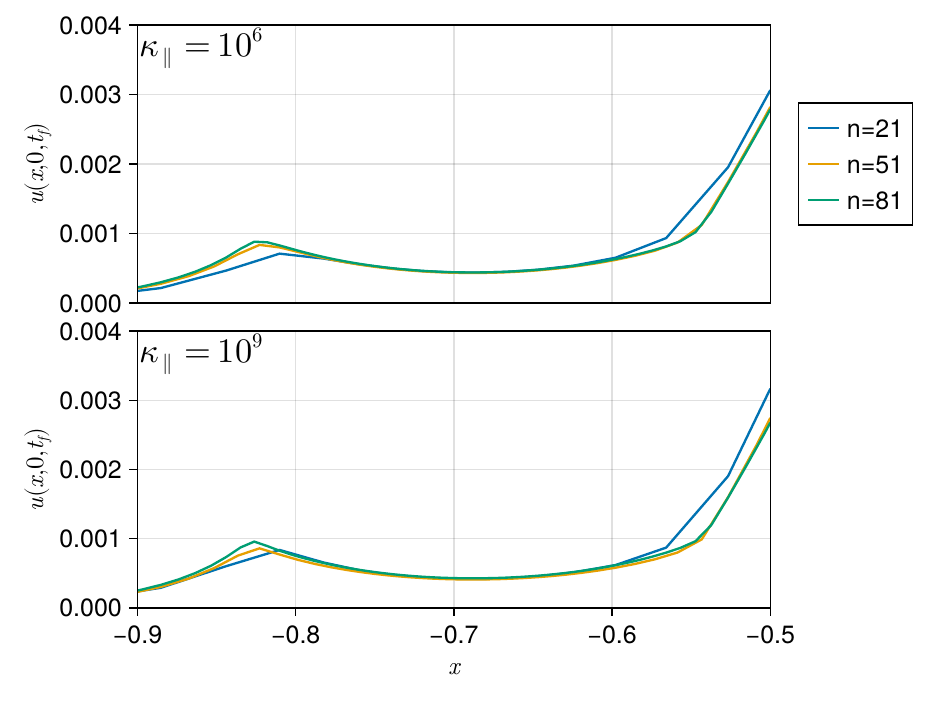}
    \caption{
    Solution along chords extending from $-0.9\leq x \leq -0.5$, $y=0$ for varying resolutions for the second order case. The legend indicates the grid resolution such that $n=n_x=n_y$.
    \textbf{Top}: $\kappa_\parallel=10^6$.
    \textbf{Bottom}: $\kappa_\parallel=10^9$.}
    \label{fig:Circular single island line plot}
\end{figure}

We plot the reference solution with a partial Poincar\'e section in the top of Figure \ref{fig:circle self convergence error field}. The solution is symmetric about the $x$-axis and so only the upper half plane is shown. The solution is cut off at a values $u>0.002$ in order to emphasise the island shape. In the lower half of Figure \ref{fig:circle self convergence error field} we show the point-wise relative error with $\bm{u}^\star = \bm{u}_{ref}$.
This shows that the errors are maximised at the boundary between the interior square domain and outer grids. This is likely because the grid is not aligned with magnetic field, and the coordinate Jacobian has a jump across the grid boundaries.

\begin{figure}[H]
    \centering
    \includegraphics[width=0.8\linewidth]{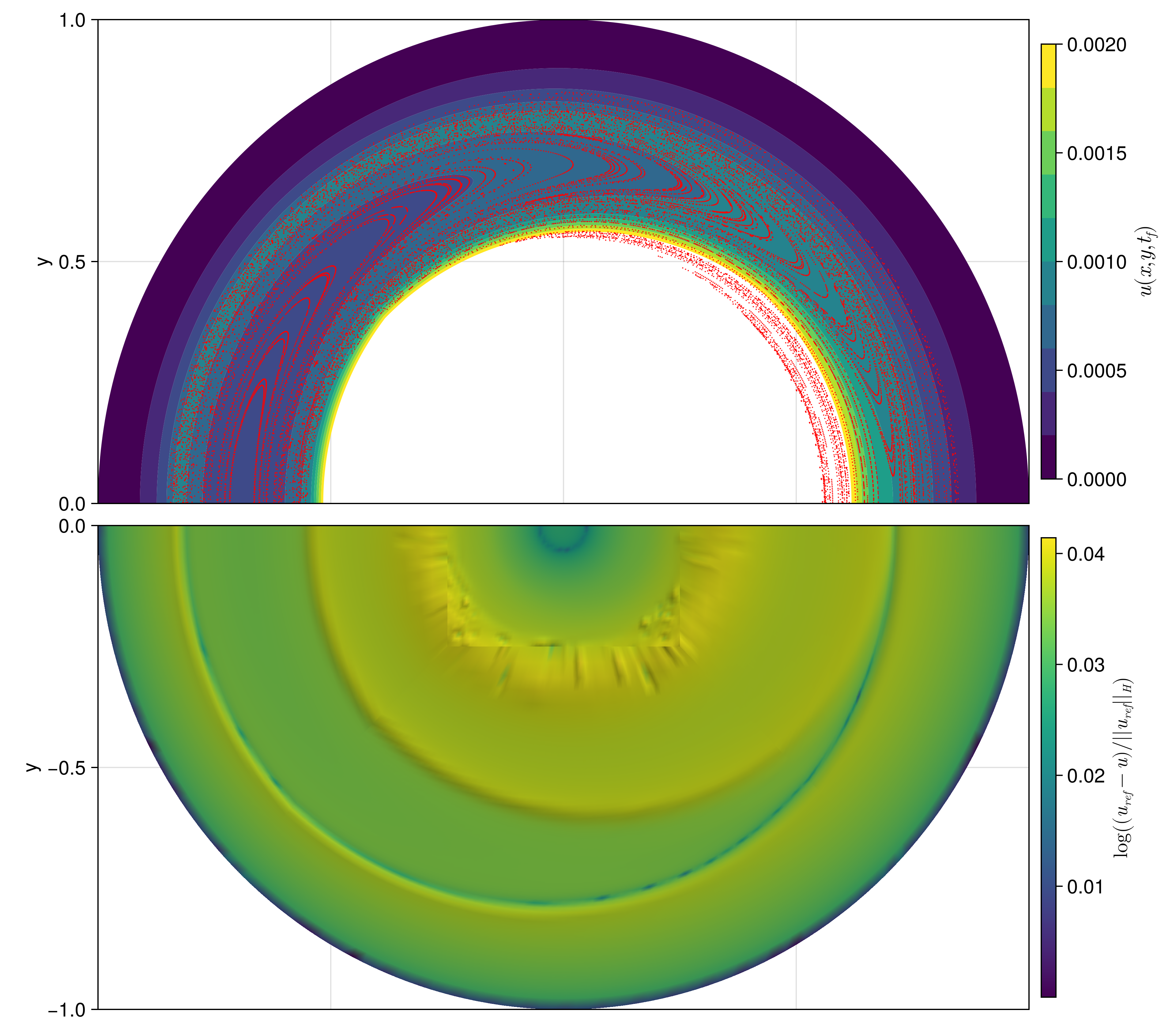}
    \caption{\textbf{Top}: Reference solution in upper half plane with partial Poincar\'e section overlayed to illustrate the solution conforming to the island shape.
    \textbf{Bottom}: The point-wise relative error. This can be seen to be maximised at the barrier between the interior grid and the outer domains, likely because the grid is not aligned with the magnetic field. The separatrix can also be seen in the error field as two curves of higher error meeting near $x=0.7$.}
    \label{fig:circle self convergence error field}
\end{figure}

\subsection{Solution in SPEC equilibria}\label{sec:Numerical experiments}

For an experimental test we use a magnetic equilibrium from SPEC. These equilibria are defined by a disjoint union of magnetic fields which occupy consecutive nested annular regions with a solid torus at the centre. The boundaries of each region are flux surfaces upon which $\bm{B}\cdot\bm{n}=0$, i.e. total transport barriers to magnetic field lines. For further detail we refer readers to references \cite{hudson_computation_2012,qu_stepped_2020}.
For this experiment we choose a single-volume, five field period stellarator for which the input file can be found in the SPEC repository\footnote{Input file: \lstinline{G3V01L0Fi.002.sp} from \href{https://github.com/PrincetonUniversity/SPEC}{SPEC Github}}. The parameterisation of SPEC annulus boundaries is included in the github repository to enable reproduction of figures in this paper \footnote{\href{https://github.com/Spiffmeister/FaADE_paperb}{https://github.com/Spiffmeister/FaADE\_paperb}}.
SPEC equilibria can be read with the Python or MATLAB routines provided with SPEC, or with the Julia package \textit{SPECReader}\footnote{\href{https://github.com/JuliaPlasma/SPECReader.jl}{https://github.com/JuliaPlasma/SPECReader.jl}}.

The computational plane is shown with Poincar\'e section and a low resolution version of the computational grid in Figure \ref{fig:SPEC equilibrium poincare}. The island chain is apparent at the edge, with widest islands around $Z\approx\pm0.55$, $R\approx5.7$. 

\begin{figure}[H]
    \centering
    \includegraphics[width=0.5\textwidth]{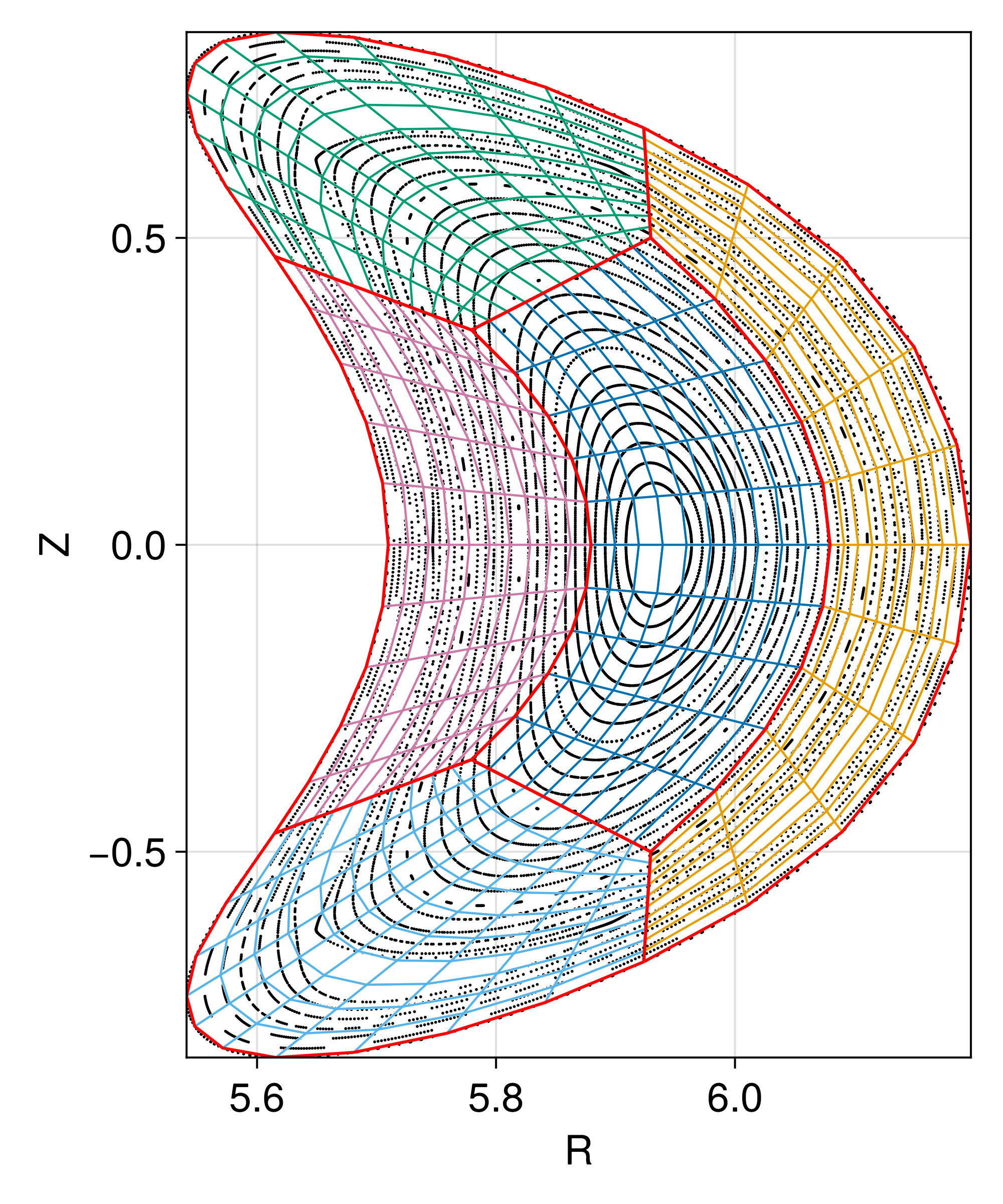}
    \caption{SPEC Poincar\'e section and low resolution multiblock grid. Note that the grid resolution here is purely for illustration and is not what is used in the computation. The red lines mark the subgrid boundaries.}
    \label{fig:SPEC equilibrium poincare}
\end{figure}

To avoid the trivial solution, a source term is added in the logical coordinates of SPEC which is equivalent to the single island case \eqref{eq:Single island self convergence source term},
\begin{align}
    u(s,\theta,t) = 4(1-s^2)^8, \qquad s\in[0,1]
\end{align}
The initial condition is zero everywhere $u(s,\theta,0) = 0$.

In Figure \ref{fig:SPEC equilibrium contour} we show the results of a simulation using the grid boundaries shown in Figure \ref{fig:SPEC equilibrium poincare} with interior domain having $n_q\times n_r = 61\times61$ grid points, and all others using $n_q\times n_r = 61\times 71$. Note that in the exterior regions the direction of the logical coordinate $r$ points from the interior region to the outer boundary, and does not correspond with the $Z$ direction. The time step and final time is $\Delta t = 10^{-8}$ and $t_f=5\times10^{-3}$ respectively. We also ran a case with $\Delta t = 10^{-7}$, but the results are similar, with the case presented here having a slightly flatter profile across the islands.

\begin{figure}[H]
    \centering
    \includegraphics[width=\linewidth]{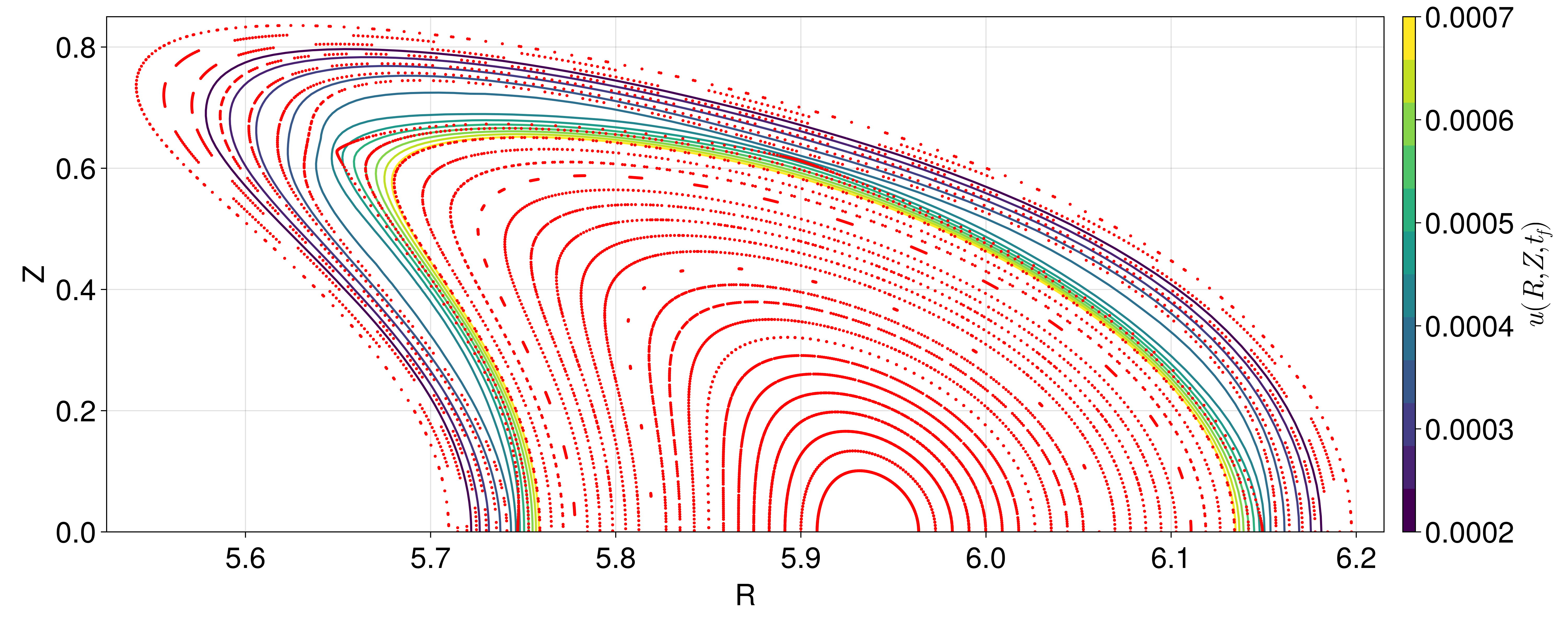}
    \caption{Contour plot showing the solution contours between $2\times10^{-4}\leq u\leq 7\times10^{-4}$ and the Poincar\'e section. This range has been selected to make it easier to identify the behaviour near the island. Note that we have significantly stretched the figure in the $R$ direction to make the contours easier to see.}
    \label{fig:SPEC equilibrium contour}
\end{figure}

It is possible that even smaller time steps would provide further flattening across the island. However, the wall-time may make this computationally infeasible. Furthermore, positivity of the solution at the boundary can become an issue with larger time steps. The simplest remedy to this is to use linear interpolation (instead of cubic Hermite) for the parallel map. However, this will also impact the order of the method.

\section{Conclusion}\label{sec:Conclusion}

In this work we have extended the penalty method for solving the anisotropic diffusion equation in Cartesian geometry to curvilinear coordinate systems introduced in \cite{muir_provably_2025}. 
We demonstrated how the method can be used to compute the solution in complex geometries by splitting the domain into multiple blocks, which are coupled together weakly by an interface simultaneous approximation term.
The summation by parts finite difference operators and parallel penalty gives a provably stable discretisation of the problem at the semi-discrete level. Furthermore, the unconditional stability property of the time stepping is also extended to the curvilinear case presented here.

Numerical tests by manufactured solution verify the convergence of the \ac{sbp} finite difference operator in curvilinear geometry.
Self convergence show that in complex geometry with the single island the convergence rate is degraded. We note that plots show that the error is maximum at the interior domain boundary, overshadowing the separatrix errors slightly.

Finally, we show that the method presented in this work is capable of modelling the solution profile in highly shaped geometries. We note that the time step in such a case is very limited. One remedy to this in the future may be to include more poloidal planes. We note also that the application of the parallel map is by far the most expensive part of the simulation, so improving efficiency would also be a possible way of resolving this issue.
Lastly, a positivity preserving interpolation scheme may be required to resolve issues at the edge of the simulation.

\section*{Acknowledgements}

\printbibliography

\appendix
\section{Symmetry and definiteness of perpendicular operator}\label{appendix:Perpendicular operator SPD}

We repeat the relevant matrix here for proof of the lemma below,
\begin{align*}
    \mathcal{A}_q = \begin{bmatrix}
        \tau_{q,0}          & [\bm{K}_q]_{j,k}    & [\bm{K}_{qr}]_{j,k}    \\
        [\bm{K}_q]_{j,k}    & [\bm{K}_q]_{j,k}    & [\bm{K}_{qr}]_{j,k}    \\
        [\bm{K}_{qr}]_{j,k} & [\bm{K}_{qr}]_{j,k} & [\bm{K}_r]_{j,k}
    \end{bmatrix}
\end{align*}

\begin{proof}[Proof of Lemma \ref{lem:SAT q Dirichlet matrix}]
    First define the matrices,
    \begin{align*}
        \widetilde{\mathcal{A}}_q = \begin{bmatrix} \tau_{q,0} - [\bm{K}_q]_{j,k} & 0 & 0 \\ 
        0 & [\bm{K}_q]_{j,k} & [\bm{K}_{qr}]_{j,k} \\
        0 & [\bm{K}_{qr}]_{j,k} & [\bm{K}_r]_{j,k} \end{bmatrix},\quad
        P = \begin{bmatrix} 1 & 1 & 0 \\ 0 & 1 & 0 \\ 0 & 0 & 1 \end{bmatrix},\quad
        C = \begin{bmatrix} [\bm{K}_q]_{j,k} & [\bm{K}_{qr}]_{j,k} \\
            [\bm{K}_{qr}]_{j,k} & [\bm{K}_r]_{j,k} \end{bmatrix},
    \end{align*}
    such that $P \widetilde{\mathcal{A}}_q P^T = \mathcal{A}_q$ and where $C$ is the lower right 2-by-2 block of $\mathcal{A}_q$. For any vector $\bm{v}\in\R^3$ we have,
    \begin{align*}
        \bm{v}^T P \widetilde{\mathcal{A}}_q P^T \bm{v} = 
        \bm{w}^T \widetilde{\mathcal{A}}_q \bm{w} = 
        w_1 (\tau_{q,0} - [\bm{K}_q]_{j,k}) w_1 + \begin{bmatrix} w_2 \\ w_3 \end{bmatrix}^T C \begin{bmatrix} w_2 \\ w_3 \end{bmatrix}.
    \end{align*}
    The first term is non-negative when $\tau_{q,0} \geq [\bm{K}_q]_{j,k}$, the second term is also non-negative if $C$ is positive semi-definite. Noting that $C$ is symmetric then by introducing
    \begin{align*}
        P_2 \begin{bmatrix} 1 & 0 \\ -\frac{[\bm{K}_{qr}]_{j,k}}{[\bm{K}_q]_{j,k}} & 1 \end{bmatrix} 
        \quad\text{such that},\quad 
        P_2 C P_2^T = \begin{bmatrix} [\bm{K}_q]_{j,k} & 0 \\ 0 & [\bm{K}_r]_{j,k} - \frac{[\bm{K}_{qr}]^2_{j,k}}{[\bm{K}_q]_{j,k}} \end{bmatrix},
    \end{align*}
    and noting the definitions in \eqref{eq:discrete curvilinear diffusion matrix} then the first diagonal of $P_2 C P^T$ is positive. Taking the bottom right term and dividing though by $J \kappa_\perp$ gives, for a given $j,k$,
    \begin{align*}
        (r_x^2 + r_y^2) - \frac{(q_x r_x + q_y r_y)^2}{q_x^2 + q_y^2} = \frac{(q_x r_x - q_y r_y)^2}{q_x^2 + q_y^2} \geq 0.
    \end{align*}
    Hence the matrix $C$ is also symmetric and positive semi-definite. Therefore when $\tau_{q,0} \ge [\bm{K}_q]_{j,k}$, the matrix $\mathcal{A}_q$ is symmetric and positive semi-definite.
\end{proof}

We repeat the relevant matrix here for proof of the lemma below,
\begin{align*}
    \mathcal{G} = \begin{bmatrix}
        \tau_{I,0} 					& \half [\bm{K}_q^-]_{n_q,i}			
            & \half [\bm{K}_q^+]_{1,i}	& \half [\bm{K}_{qr}^-]_{n_q,i} 	
            & \half [\bm{K}_{qr}^+]_{1,i} \\
        \half [\bm{K}_q^-]_{n_q,i}  	& [\bm{H}_q\bm{K}_q^-]_{n_q,i}	
            & 0                         & [\bm{H}_q\bm{K}_{qr}^-]_{n_q,i}
            & 0 \\
        \half [\bm{K}_q^+]_{1,i}    	& 0
            & [\bm{H}_q\bm{K}_q^+]_{1,i}& 0
            & [\bm{H}_q\bm{K}_{qr}^+]_{1,i} \\
        \half [\bm{K}_{qr}^-]_{n_q,i}   & [\bm{H}_q\bm{K}_{qr}^-]_{n_q,i}
            & 0                         & [\bm{H}_q\bm{K}_r^-]_{n_q,i} 	
            & 0 \\
        \half [\bm{K}_{qr}^+]_{1,i} 	& 0
            & [\bm{H}_q\bm{K}_{qr}^+]_{1,i} & 0
            & [\bm{H}_q\bm{K}_r^+]_{1,i}
    \end{bmatrix}
\end{align*}

\begin{proof}[Proof of Lemma \ref{lem:SAT interface matrix}]
    Consider the matrix,
    \begin{align*}
        P = \begin{bmatrix}
            1 & \frac{1}{2[\bm{H}_q]_{n_q,i}} & \frac{1}{2[\bm{H}_q]_{1,i}} & 0 & 0 \\
            0 & 1 & 0 & 0 & 0 \\
            0 & 0 & 1 & 0 & 0 \\
            0 & 0 & 0 & 1 & 0 \\
            0 & 0 & 0 & 0 & 1 
        \end{bmatrix}, \quad
        C = \begin{bmatrix}
            [\bm{H}_q\bm{K}_q^-]_{n_q,i} & 0 & 
                [\bm{H}_q\bm{K}_{qr}^-]_{n_q,i} & 0 \\
            0 & [\bm{H}_q\bm{K}_q^+]_{1,i} &
                0 & [\bm{H}_q\bm{K}_{qr}^+]_{1,i} \\
            [\bm{H}_q\bm{K}_{qr}^-]_{n_q,i} & 0 & 
                [\bm{H}_q\bm{K}_r^-]_{n_q,i} & 0 \\
            0 & [\bm{H}_q\bm{K}_{qr}^+]_{1,i} &
                0 & [\bm{H}_q\bm{K}_r^+]_{1,i}
        \end{bmatrix},
    \end{align*}
    \begin{align*}
        \widetilde{\mathcal{G}} = \begin{bmatrix} 
            \tau_{I,0} - \frac{[\bm{K}_q^-]_{n_q,i}}{[\bm{H}_q]_{n_q,i}} - \frac{[\bm{K}_q^+]_{1,i}}{[\bm{H}_q]_{1,i}} & 0 & 0 & 0 & 0 \\
            0 & [\bm{H}_q\bm{K}_q^-]_{n_q,i} & 0 & 
                [\bm{H}_q\bm{K}_{qr}^-]_{n_q,i} & 0 \\
            0 & 0 & [\bm{H}_q\bm{K}_q^+]_{1,i} &
                0 & [\bm{H}_q\bm{K}_{qr}^+]_{1,i} \\
            0 & [\bm{H}_q\bm{K}_{qr}^-]_{n_q,i} & 0 & 
                [\bm{H}_q\bm{K}_r^-]_{n_q,i} & 0 \\
            0 & 0 & [\bm{H}_q\bm{K}_{qr}^+]_{1,i} &
                0 & [\bm{H}_q\bm{K}_r^+]_{1,i}
        \end{bmatrix},
    \end{align*}
    such that $P\widetilde{\mathcal{G}}P^T = \mathcal{G}$ and the matrix $C$ is formed by the lower right 4-by-4 elements of $\mathcal{G}$. Then for any vector $\bm{v}\in\R^n$ we can write,
    \begin{align*}
        \bm{v}^T P\widetilde{\mathcal{G}}P^T \bm{v} = \bm{w}^T \widetilde{\mathcal{A}}P^T \bm{w} = 
            w_1 \left( -\frac{1}{4}\left( \frac{[\bm{K}_q^-]_{n_q,i}}{[\bm{H}_q]_{n_q,i}} + \frac{[\bm{K}_q^+]_{1,i}}{4[\bm{H}_q]_{1,i}} \right) + \tau_{I,0} \right) w_1 + 
            \begin{bmatrix} w_2 \\ w_3 \\ w_4 \\ w_5 \end{bmatrix}^T C \begin{bmatrix} w_2 \\ w_3 \\ w_4 \\ w_5 \end{bmatrix},
    \end{align*}
    where $C$ can be shown to be symmetric and positive semi-definite. Therefore the matrix $\mathcal{G}$ is symmetric and positive definite if $\tau_{I,0} \geq \frac{1}{4}\max ([\bm{K}_q^-]_{n_q,i}/[\bm{H}_q]_{n_q,i},[\bm{K}_q^+]_{1,i}/[\bm{H}_q]_{1,i})$.
\end{proof}

We now provide the proof of stability for the perpendicular operator.

\begin{proof}[Proof of Theorem \ref{thrm:perpendicular energy bound}]
    Consider the operator $\mathbf{D}_\perp$ of equation \eqref{eq:perpendicular FD operator}. 
    Multiplying $(\bm{P}_\perp)_I$ by $\bm{u}^T\bm{H}$ and expanding the cross terms using the SBP properties from \eqref{eq:curvilinear cross derivative sbp} 
    \begin{align*}
        \bm{u}^T \bm{H} (\bm{P}_\perp)_I \bm{u} &= 
        - \bm{u}^T \bm{H}_r \bm{D}_q^T \bm{H}_{qI} \bm{K}_q \bm{D}_q - \bm{u}^T \bm{H}_r \bm{B}_{qI} \bm{K}_q \bm{D}_q \bm{u} 
            \\&\qquad
        - \bm{u}^T \bm{H}_q \bm{D}_r^T \bm{H}_{rI} \bm{K}_r \bm{D}_r \bm{u} 
            \\&\qquad
        + \tau_{I,0} \bm{u}^T \bm{H}_r \hat{\bm{L}}_{q,0} \bm{u}
        + \tau_{I,1} \bm{u}^T \bm{H}_r \left( \hat{\bm{L}}_{q,1} \bm{K}_q \bm{D}_q + \hat{\bm{L}}_{q,1} \bm{K}_{qr} \bm{D}_r \right) \bm{u}
            \\&\qquad
        + \tau_{I,2} \bm{u}^T \bm{H}_r \left( \hat{\bm{L}}_{q,1} \bm{K}_q \bm{D}_q + (\bm{K}_{qr} \bm{D}_r)^T \hat{\bm{L}}_{q,1} \right) \bm{u} 
            \\&\qquad
        - \bm{u}^T \bm{D}_r^T \bm{H}_I \bm{K}_{qr} \bm{D}_q \bm{u} - \bm{u}^T \bm{D}_q^T \bm{H}_I \bm{K}_{qr} \bm{D}_r \bm{u} + \bm{u}^T \bm{H}_r \bm{B}_{qI} \bm{K}_{qr} \bm{D}_r \bm{u}.
    \end{align*}
    Setting $\tau_{I,1} = - \tau_{I,2} = \half$, introducing the matrix $\mathcal{G}$ from equation \ref{} and $\bm{v}$ from \ref{},
    \begin{align*}
        \bm{u}^T (\bm{A}_\perp)_I \bm{u} = 
            - \bm{u}^T \bm{H} \bm{P}_\perp \bm{u} &= -\sum_{} J \Delta r h_i \bm{v}^T \mathcal{G}_i \bm{v} \\
            &\qquad +\half \bm{u}^T \bm{H}_r \left( \hat{\bm{L}}^T_{q,1} \bm{K}_{qr} \bm{D}_r + (\bm{K}_{qr} \bm{D}_r)^T \hat{\bm{L}}_{q,1} \right)\bm{u} \\
            &\qquad \bm{u}^T \bm{H}_r \bm{R}_q \bm{u} + \bm{u}^T \bm{H}_q \bm{R}_r \bm{u}.
    \end{align*}
    This gives the desired estimate,
    \begin{align*}
        \bm{u}^T (\bm{A}_\perp)_I \bm{u} \ge 0,
    \end{align*}
    which completes the proof.
\end{proof}

\end{document}